\DeclareMathAlphabet{\pazocal}{OMS}{zplm}{m}{n}
\newcommand{\C}{\pazocal{C}}
\newcommand{\E}{\pazocal{E}}
\newcommand{\F}{\pazocal{F}}
\newcommand{\K}{\pazocal{K}}
\newcommand{\Curve}{\mathcal{C}}
\renewcommand{\L}{\pazocal{L}}
\renewcommand{\O}{\pazocal{O}}
\newcommand{\refeq}{\eqref}
\theoremstyle{plain}
\newtheorem{theorem}{Theorem}[section]
\newtheorem{proposition}[theorem]{Proposition}
\theoremstyle{definition}
\newtheorem{example}[theorem]{Example}
\theoremstyle{remark}
\newtheorem{remark}[theorem]{Remark}
\newcommand{\fourbyfour}[9]{\left( \begin{array}{cccc}
		#1 & #2 & \cdots & #3 \\
		#4 & #5 & \cdots & #6\\
		\vdots & \vdots & \cdots  & \vdots \\
		#7 & #8 & \cdots & #9 \end{array} \right)}
\numberwithin{equation}{section}
 \title{Periodic Trajectories and Topology of the Integrable Boltzmann System}
\author[1]{Sean Gasiorek}
\author[2,3]{Milena Radnovi\'c}
\affil[1]{\textsc{California Polytechnic State University, San Luis Obispo}}
\affil[2]{\textsc{The University of Sydney, School of Mathematics and Statistics}}
\affil[3]{\textsc{Mathematical Institute SANU, Belgrade}}
\affil[ ]{\texttt{gasiorek@calpoly.edu, milena.radnovic@sydney.edu.au}}
\date{}
\begin{document}

\maketitle

\begin{abstract}
We consider the Boltzmann system corresponding to the motion of a billiard with a linear boundary under the influence of a gravitational field. We derive analytic conditions of Cayley's type for periodicity of its trajectories and provide geometric descriptions of caustics. 
The topology of the phase space is discussed using Fomenko graphs.
	
\smallskip

\emph{Keywords:} Integrable Boltzmann system, Kepler problem, billiards, periodic trajectories, Poncelet theorem, Fomenko graphs

\smallskip

\textbf{MSC2020:} 37J35, 37J39, 37J46, 37C79, 37C83, 70G40 
\end{abstract}

%\tableofcontents

\section{Introduction}

%This paper is inspired by the recent works \cites{F21, GJ20} on the classical Boltzmann system.
The Boltzmann system \cite{Boltzmann} consists of a massive particle moving in a gravitational field with a linear boundary (i.e. the wall) between the particle and the centre of gravity.
The reflections off the boundary are absolutely elastic, meaning that the kinetic energy remains unchanged by them, and they obey the billiard reflection law, i.e.~the angles of incidence and reflection are congruent to each other.

It was recently shown in \cite{GJ20} that this system is integrable, since it has, in addition to the energy, a second integral of motion. That additional integral can be geometrically seen in the fact that, for each trajectory, there is a fixed circle such that all arcs of the trajectory are Kepler conics with one focus at the centre of gravity and the second focus on that fixed circle.
In \cite{F21}, this system is analysed further, and it is proved that the Boltzmann map is equivalent to a shift on a given elliptic curve, which then implies a Poncelet-style closure theorem in this setting:
if a given trajectory of the Boltzmann system is $n$-periodic, then each trajectory consisting of arcs with foci on the same circle as the initial $n$-periodic trajectory is also $n$-periodic.

This Poncelet-style closure result is the initial point for this paper, from which we find the analytic conditions for periodicity of the trajectories of the Boltzmann system. Those conditions and examples are presented in Section \ref{sec:periodic}.
In Section \ref{sec:geom}, we discuss the geometry of Boltzmann trajectories. 
We show the existence of caustics and the focal reflection property in Theorems \ref{th:caustics} and \ref{th:focal}.
In Section \ref{sec:phase}, the phase space of the Boltzmann system is analysed.
We describe in detail the dynamics on the singular level sets in Theorem \ref{th:singular-level-sets} and, using the Fomenko graphs and invariants, we provide topological description of the isoenergy manifolds in Theorem \ref{th:fomenko}.

Before we proceed to those discussions, we will briefly recall, following \cite{F21}, the construction and recent results of the Boltzmann system.

\subsection{Integrability of the Boltzmann billiard}

Here we review of notions and results from \cite{F21}, which we will immediately use in Section \ref{sec:periodic} to derive analytical conditions for periodicity of the Boltzmann system.

In the classical $2$-body Kepler problem with an inverse square central force law, solutions to the reduced problem in some inertial reference frame are conics with one focus at the origin. The position $\textbf{r}$ and linear momentum $\textbf{p}$ of the reduced body relate to the angular momentum $\textbf{L}$ by 
$$
\textbf{L} = \textbf{r}\times\textbf{p},
$$
which is an integral of motion and defines the plane of motion. 
The total energy $E$ and  Laplace--Runge--Lenz vector $\textbf{A} = \textbf{p} \times \textbf{L}-\dfrac{\mathbf{r}}{|\mathbf{r}|}$ are also integrals of motion. The conic is an ellipse if $E <0$, a parabola if $E=0$, and one branch of a hyperbola if $E>0$.  

Following the notation from \cite{F21}, we consider the Boltzmann system in the plane with coordinates $x_1, x_2$ with the centre of gravity at the origin and wall at $x_2 =1$.
Vector $\mathbf{A}$ will then also be in that plane and we denote its components by $A_1$, $A_2$, while $\mathbf{L}$ is orthogonal to the plane and we denote its third component by $L$.

It was shown in \cite{GJ20} that 
$$
D := L^2 - 2A_2
$$
is an integral of motion for the Boltzmann system. 
In the coordinates $(x_1,x_2)$, the Kepler conic $\K$ is given by 
\begin{equation}\label{eq:KeplerConic}
	\K: \quad x_1^2 + x_2^2 = (D+2A_2 - A_1 x_1 - A_2 x_2)^2.
\end{equation}
One focus of $\K$ is at the origin and the other one is $F_2:=(A_1/E, A_2/E)$ that lies on a circle $\C$ of radius $R/|E|$ with 
$R^2 := 1+2DE + 4E^2$ centred at the point $\F (0,2)$.

\begin{remark}\label{rem:OF}
	The point $\F(0,2)$, the centre of the circle $\C$, is symmetric to the origin $\O(0,0)$ with respect to the wall.
	We will show in Section \ref{sec:geom} that this point will play other significant roles in the dynamics of the Boltzmann system, see Theorems \ref{th:caustics} and \ref{th:focal}.
\end{remark}

The pair $(E,D)$ determines the level set $X(D,E)$ of the system.
According to \cite[Theorem 2]{F21}, the level set $X(D,E)$ has a compactification that is a smooth projective curve of genus $1$ whenever 
the following conditions hold:
\begin{equation}\label{eq:singular-conditions}
	D^2\neq4,\quad 1+2ED+4E^2\neq0,\quad D+2E\neq0.	
\end{equation}
Such level sets correspond to the non-degenerate Liouville tori.
If some of the inequalities \refeq{eq:singular-conditions} are not satisfied, the level set $X(D,E)$ will be singular.
We note that in Section \ref{sec:periodic}, where our goal is to find analytic conditions for periodicity, only non-singular level sets are of interest, so we will assume there that the conditions \refeq{eq:singular-conditions} hold. 
On the other hand, the singular level sets will be analysed in more detail in Section \ref{sec:phase}.

In the non-degenerate case, the elliptic curve corresponding to $X(D,E)$ is:
\begin{equation}\label{eq:curve}
	y^2=(1-s^2)(1-k^2s^2),
	\quad\text{with}\quad
	k^2=\frac{D+4E-2R}{D+4E+2R}.
\end{equation}
That elliptic curve can also be seen in the affine space with coordinates $(x,A_1,A_2)$ as:
$$
A_1^2+A_2^2-4EA_2=1+2DE,
\quad
x^2+1=(A_2+D-A_1x)^2,
$$
where $(x,1)$ is a reflection point on the wall and $\mathbf{A}=(A_1,A_2)$ is the corresponding Laplace-Runge-Lenz vector.
The Boltzmann map is given in those coordinates as a composition of two involutions $i$ and $j$:
\begin{equation}\label{eq:involutions}
	i(x,A_1,A_2)=(x',A_1,A_2),
	\quad
	j(x,A_1,A_2)=(x,A_1',A_2'),
\end{equation}
with
$$
x'=\frac{-2(A_2+D)A_1}{1-A_1^2}-x,\quad
A_1'=\frac{(x^2-1)A_1-2xA_2+4xE}{x^2+1},\quad
A_2'=\frac{-2xA_1-(x^2-1)A_2+4x^2E}{x^2+1}.
$$
The involution $i$ maps one intersection point of the Kepler ellipse to the other one.
On the other hand, the involution $j$ preserves the intersection point, but maps the vector $\mathbf{A}$ before the reflection to the vector $\mathbf{A}'$ after the reflection, i.e.~switches to the next Kepler ellipse arc of the Boltzmann trajectory.

Both involutions $i$, $j$ have fixed points \cite{F21}, thus they represent point reflections on the Jacobian of the elliptic curve, and 
the Boltzmann map, as their composition $j\circ i$, is equivalent to  following shift of the Jacobian of that curve:
$$
u\mapsto u+\int_{-1}^{s_0}\frac{ds}y,
\quad\text{where}\quad
s_0=\frac{D+2E+R}{D+2E-R},
$$
which is the shift by the divisor $Q_+-P_+$, where points $P_+$, $Q_+$ are given by:
\begin{equation}\label{eq:P+Q+}
P_+(-1,0),
\quad
Q_+\left(
\dfrac{D+2E+R}{D+2E-R},
\dfrac{-4i(D+2E)R}{(D+2E-R)^2\sqrt{(D + 2E)(D + 4E + 2R)}}
\right).
\end{equation}
Thus the $n$-periodicity of any trajectory on $X(D,E)$ is equivalent to:
$$
n(Q_+-P_+)\sim0.
$$
Since that condition does not depend on the initial point of motion, but only on the constants $D$, $E$, we will have the Poncelet property: $n$-periodicity of one trajectory of the Boltzmann system implies that all trajectories with the same constants $D$, $E$ of motion will also be $n$-periodic \cite{F21}.

\section{Periodic trajectories of the Boltzmann system}\label{sec:periodic}

In this section, we will first make a very brief review of results connected with closure theorems of Poncelet type and the corresponding Cayley-type conditions, in particular in the context of billiards.
After that, we provide in Theorem \ref{th:cayley} the analytic conditions for periodicity of the Boltzmann billiard, and then illustrate it by several examples.

We note that closure theorems of Poncelet type and the corresponding analytic conditions originate in classical works of XIXth century mathematicians. 
Namely, the classical Poncelet porism states that the existence of a closed polygon inscribed in one conic and circumscribed about another one implies the existence of infinitely many such polygons;
moreover, each point of the circumscribed conic is a vertex of one of them and all of those polygons have the same number of sides  \cite{Poncelet}.
While Poncelet's approach was synthetic geometric, Jacobi gave alternative proof using addition formulas for elliptic functions, see \cites{Jacobi, Schoenberg}.
Explicit analytic conditions for closure were derived by Cayley \cite{Cayley}.
Modern algebro-geometric approach to Poncelet theorem and those analytic conditions can be found in \cites{GH1977,GH1978}.

The interest in the Poncelet theorem, its generalizations and its applications has been renewed in the last few decades, and there is a large body of works regarding that.
A recent detailed account on the history of Poncelet theorem can be found in \cites{DC2016a,DC2016b}.
More about analytic conditions for the Poncelet theorem and the generalizations, with a review of modern development in the theory of billiards can be found in \cites{DR2011,DR2014}.
Among various generalisations of the Poncelet theorem, those in distinct geometries or those where a potential field is present were developed, see for example \cites{Veselov1990,DJR2003,GKT2007,KT2009,GR2021} and \cites{AF2006,Fed2001}.
For even more results, see references therein.

The conditions for $n$-periodicity of the Boltzmann billiard are given in the following theorem.

\begin{theorem}\label{th:cayley}
The trajectories of the Boltzmann system with integrals $D$ and $E$ satisfying \refeq{eq:singular-conditions} are $n$-periodic if and only if
	\begin{equation*}
	\det\fourbyfour{B_{3}}{B_{4}}{B_{m+1}}{B_{4}}{B_{5}}{B_{m+2}}{B_{m+1}}{B_{m+2}}{B_{2m-1}}=0
\quad\text{with}\quad
n=2m \geq 4,
	\end{equation*}
or
	\begin{equation*}
	\det \fourbyfour{B_{2}}{B_{3}}{B_{m+1}}{B_{3}}{B_{4}}{B_{m+2}}{B_{m+1}}{B_{m+2}}{B_{2m}}=0
\quad\text{with}\quad
n=2m+1\geq3,
	\end{equation*}
where $B_0$, $B_1$, $B_2$, $B_3$, \dots are the coefficients in the Taylor expansion of:
$$\sqrt{\left(2(D+2E)\xi-R\right)\left(4R(D+2E)^2\xi^2 + 2(D+2E)(D^2+2DE-2)\xi +R\right)}$$
around $\xi=0$.
\end{theorem}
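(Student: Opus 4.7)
The periodicity condition, as recalled in the introduction, is $n(Q_+ - P_+) \sim 0$ on the elliptic curve $y^2 = (1-s^2)(1-k^2 s^2)$. The plan is to translate this into a Cayley-type Hankel determinant by passing to a Weierstrass-style cubic model in which $P_+$ lies at infinity and $Q_+$ lies over $\xi = 0$. Once the curve is in that form, a classical Riemann--Roch argument for $n$-torsion produces the determinantal conditions.

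I would first introduce a M\"obius change of variable of the form $\xi = C(s - s_0)/(s+1)$ with $C = -(D+2E-R)/2$, together with a suitable rescaling of $y$, and verify that the resulting curve is exactly $\eta^2 = P(\xi)$ where $P(\xi)$ is the cubic polynomial appearing in the theorem. By construction, the point $s = -1$ (i.e.\ $P_+$) is sent to $\xi = \infty$, while $s = s_0$ (i.e.\ $Q_+$) is sent to the affine point $(0, B_0)$ with $B_0^2 = P(0) = -R^2$; the three finite roots of $P$ correspond to the remaining three Weierstrass points on the original curve. The coefficient match is a direct computation using the defining identities $k^2 = (D+4E-2R)/(D+4E+2R)$ and $R^2 = 1 + 2DE + 4E^2$.

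With $P_+$ placed at infinity, the group-law origin of the cubic is $P_+$ itself, and the periodicity condition becomes $n\,Q_+ \sim 0$. By Riemann--Roch, this is equivalent to the existence of a non-zero function $f \in L(n \cdot P_+)$ with a zero of order $n$ at $Q_+$. I would use the standard basis of $L(n \cdot P_+)$ consisting of monomials $\xi^i \eta^j$ with $j \in \{0, 1\}$ and $2i + 3j \le n$, expand $\eta(\xi) = \sqrt{P(\xi)} = \sum_{k \ge 0} B_k \xi^k$ around $\xi = 0$ (matching the definition of the $B_k$ in the statement), and write $f = \sum_i a_i \xi^i + \sum_i b_i \xi^i \eta(\xi)$. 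Requiring $f$ to vanish to order $n$ at $Q_+$, i.e.\ that its first $n$ Taylor coefficients in $\xi$ vanish, produces an $n \times n$ linear system in the unknowns $\{a_i, b_i\}$.

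The decisive observation is that the coefficients $a_i$ appear only in the first $\lfloor n/2 \rfloor + 1$ of these equations; they can therefore be eliminated, and a non-trivial solution of the full system exists if and only if the remaining equations, which involve only the $b_i$ and have a Hankel structure, admit one. Tracking the indices separately in the two parities $n = 2m$ and $n = 2m+1$ recovers exactly the two determinantal conditions in the statement, with the shift of the starting index (from $B_3$ to $B_2$) reflecting the extra basis element of $L(n\cdot P_+)$ available when $n$ is odd. The principal obstacle is the first step: carrying out the change of coordinates explicitly and confirming that the cubic polynomial one obtains is precisely the one written in the theorem. Once that is in hand, the remaining steps are a standard application of Riemann--Roch and elementary linear algebra.
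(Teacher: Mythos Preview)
Your approach is essentially identical to the paper's: both send $P_+$ to infinity and $Q_+$ to $\xi=0$ by a M\"obius change, land on the cubic $\eta^2=P(\xi)$ stated in the theorem, and then run the standard Griffiths--Harris argument with the basis $\{1,\xi,\dots,\xi^m,\eta,\xi\eta,\dots\}$ of $\L(nP_+)$ to obtain the Hankel determinants. The paper's transformation is $\xi=\dfrac{1}{s+1}-\dfrac{D+2E-R}{2(D+2E)}$, which in your notation corresponds to $C=-\dfrac{D+2E-R}{2(D+2E)}$ rather than $C=-\dfrac{D+2E-R}{2}$; with your value of $C$ the image of the branch point $s=1$ is $\xi=R/2$, which is not a root of the cubic's linear factor, so the verification step you flag would force the missing factor of $1/(D+2E)$. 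Apart from this constant, the argument matches the paper's line for line.
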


\begin{proof}
The algebro-geometric condition for $n$-periodicity is $n(P_+-Q_+)\sim0$, with points $P_+$, $Q_+$ given by \eqref{eq:P+Q+} on the curve \eqref{eq:curve} \cite{F21}.

We note that $P_+$ is a branching point of that curve.
To simplify the calculations, we make the coordinate transformation $(s,y)\to(\xi,\eta)$ such that $\xi(P_+)=\infty$, $\xi(Q_+)=0$:
$$
\xi=\frac1{s+1}-\frac{D+2 E-R}{2 D+4 E},
\quad
\eta=\frac{y}{(s+1)^2}\cdot(D+2E)\sqrt{(D + 2E)(D + 4E + 2R)}.
$$
In these new coordinates, the elliptic curve \eqref{eq:curve} becomes
$$
\Curve:
\quad
\eta^2 =\left(2(D+2E)\xi-R\right)\left(4R(D+2E)^2\xi^2 + 2(D+2E)(D^2+2DE-2)\xi +R\right).
$$
We now derive the Cayley-type conditions similarly as in \cite{GH1978}.

The divisor condition $n(Q_+ - P_+)\sim 0$ is equivalent to the existence of a meromorphic function on $\Curve$ with a unique pole at $P_+$ and unique zero at $Q_+$ such that the pole and zero are both of multiplicity $n$. 
We denote by $\L(nP_+)$ the linear space of all meromorphic functions on $\Curve$ which have a pole of order at most $n$ at $P_+$ and are holomorphic elsewhere.
A basis of $\L(nP_+)$ is:
$$
\begin{aligned}
&\left\{1, \xi, \xi^2, \ldots, \xi^m, \eta, \eta \xi, \ldots, \eta \xi^{m-2} \right\},
\quad\text{for}\quad n=2m;
\\
&\left\{1, \xi, \xi^2, \ldots, \xi^m, \eta, \eta \xi, \ldots, \eta \xi^{m-1} \right\},
\quad\text{for}\quad n=2m+1.
\end{aligned}
$$
It can be derived, in the same way as it is done in \cite{GH1978}, that there is a nontrivial linear  combination of these functions with a zero of order $n$ at $\xi=0$ if and only if the stated determinant conditions hold. 
\end{proof}

Now, we use the analytic conditions from Theorem \ref{th:cayley} to construct examples of periodic trajectories.

\begin{example}[Period 3] \label{ex:Period3}
The Cayley-type condition for a $3$-periodic trajectory is $B_2=0$. 
The coefficient $B_2$ is calculated from the Taylor expansion as stated in Theorem \ref{th:cayley}: 
$$
B_2 = -\frac{(D+2E)^2(4(D^2-4)E^2 + 4D(D^2-3)E+D^4-2D^2-3)}{2|1+2DE + 4E^2|}.
$$
By assumption \refeq{eq:singular-conditions}, $D+2E \neq 0$ and $1+2DE + 4E^2 \neq 0$, so $B_2 =0$ is equivalent to 
$$
4(D^2-4)E^2 + 4D(D^2-3)E+D^4-2D^2-3=0,
$$ 
which is precisely the condition stated in \cite{F21}. 
In Figure \ref{fig:Period3}, two $3$-periodic trajectories are shown.
\begin{figure}[ht]
	\centering
\includegraphics[width=0.43\textwidth]{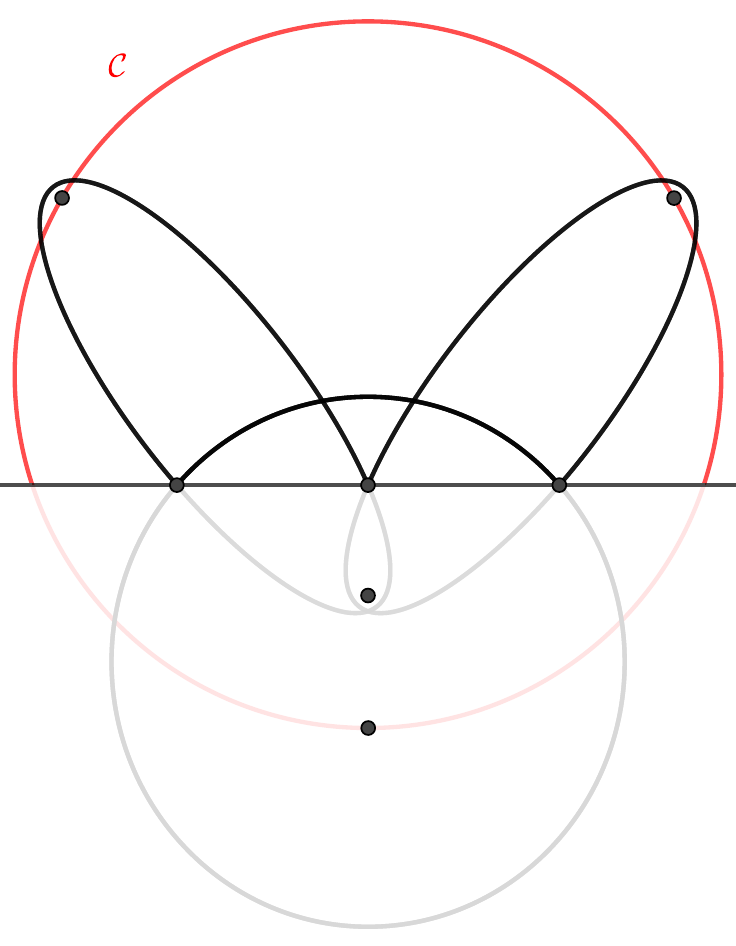}  
\hskip0.5cm
\includegraphics[width=0.43\textwidth]{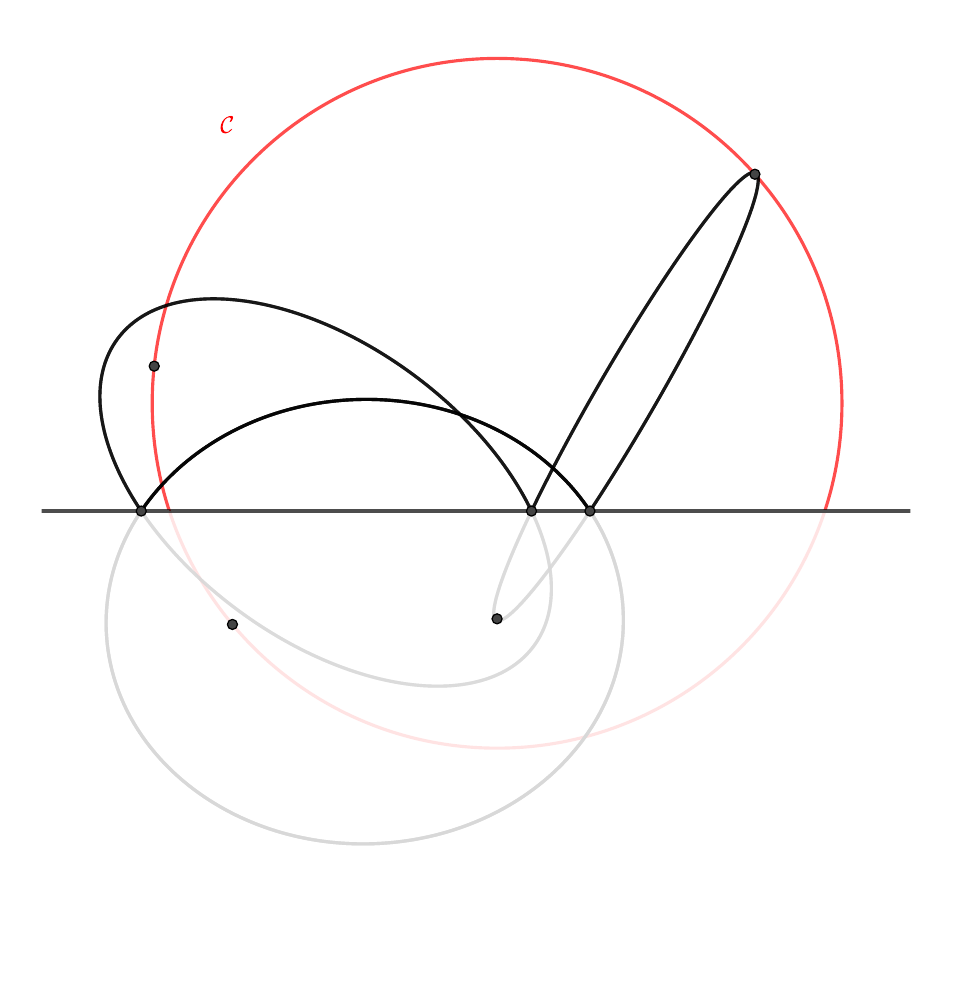}
\caption{Period 3 trajectories corresponding to $(E,D) = (-5/24,7/4).$
The trajectory on the left-hand side is symmetric with respect to the vertical axis.} %$D=7/4$, $E=-5/24$. }
\label{fig:Period3}
\end{figure}
\end{example}

\begin{example}[Period 4] \label{ex:Period4}
Theorem \ref{th:cayley} states that the analytic Cayley-type condition for $4$-periodic trajectories is $B_3=0$, which is equivalent to:
$$
(D^2+2DE-1)((D+2E)^2(D^2-4)-1)=0.
$$ 
Examples are shown in Figure \ref{fig:Period4}.
\begin{figure}[ht]
	\centering
\includegraphics[width=0.43\textwidth]{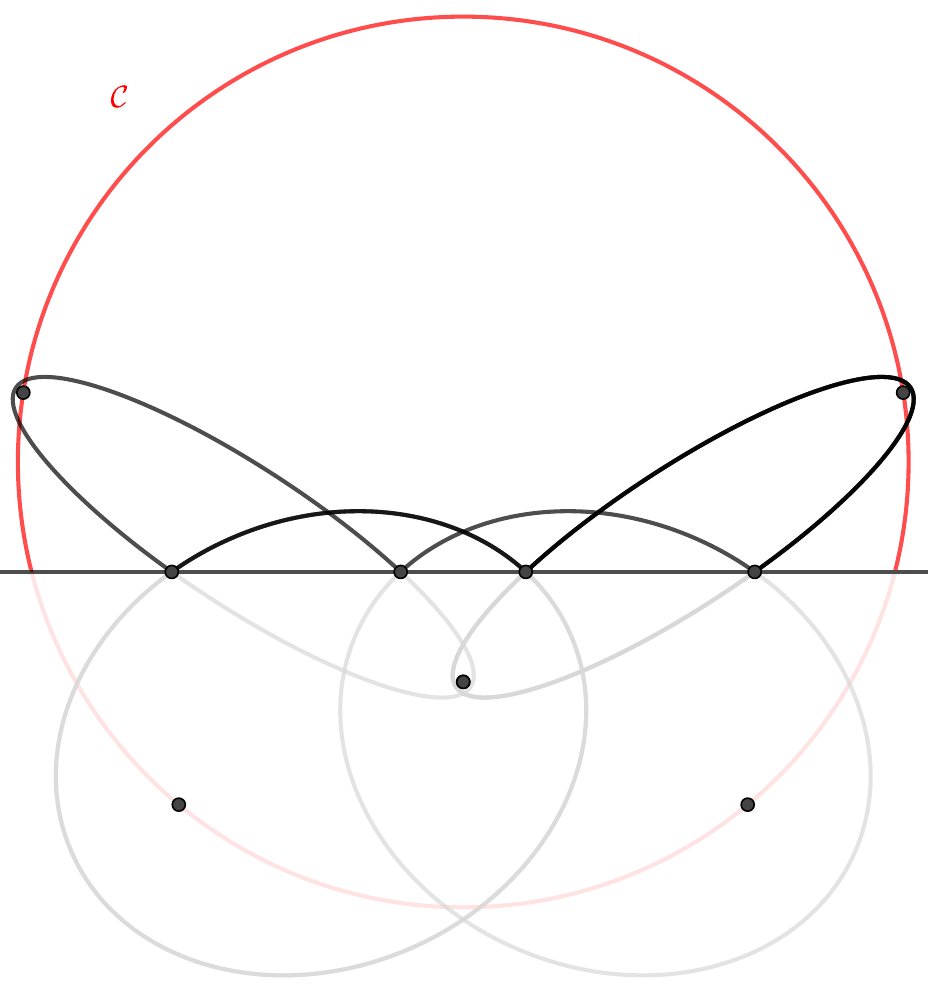}
\hskip0.5cm
 \includegraphics[width=0.43\textwidth]{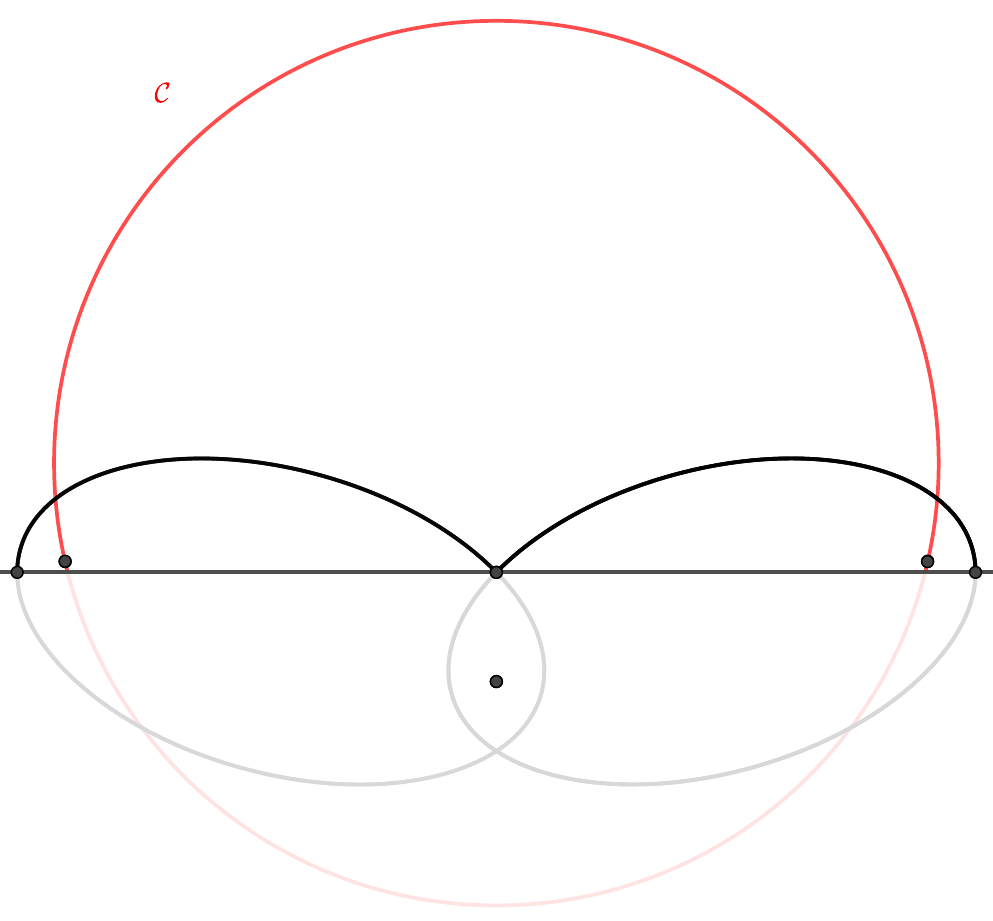}
\caption{Period 4 trajectories corresponding to $(E,D) = (-20/99,11/9)$. All $4$-periodic trajectories on this level set are symmetric with respect to the vertical axis. The trajectory in the right-hand side is consists of two arcs which are traced twice in opposite direction along the period, and they are orthogonal to the wall at the leftmost and the rightmost reflection points.} % $D=11/9$, $E=-20/99$. }
\label{fig:Period4}
\end{figure}
\end{example}

\begin{example}[Period 5]\label{ex:Period5}
The condition for a $5$-periodic trajectory is $B_2 B_4 - B_3^2=0$, which is equivalent to: 
\begin{align*}
0 =\ &D^{12}-6 D^{10}+3 D^8+60 D^6-169 D^4+42 D^2+5
+64 \left(D^2-4\right)^3 E^6
\\
&
+64 (D^2-4) \left(3 \left(D^2-7\right) D^2+52\right) D E^5 
%\\
%&
+16 (D^2-4)\left(15 D^6-90 D^4+251 D^2+4\right) E^4 \\
&+32 (D^2-4) \left(5 D^6-25 D^4+71 D^2+13\right) D E^3  
\\
&
+4 \left(386 D^6-452 D^4-537 D^2+15 \left(D^2-8\right) D^8+52\right) E^2 \\
&+4 \left(3 D^{10}-21 D^8+46 D^6+22 D^4-257 D^2+47\right) D E.
\end{align*}
In Figure \ref{fig:Period5}, two such trajectories are shown.
\begin{figure}[ht]
	\centering
	\begin{tabular}{c c}
		\includegraphics[width=0.43\textwidth]{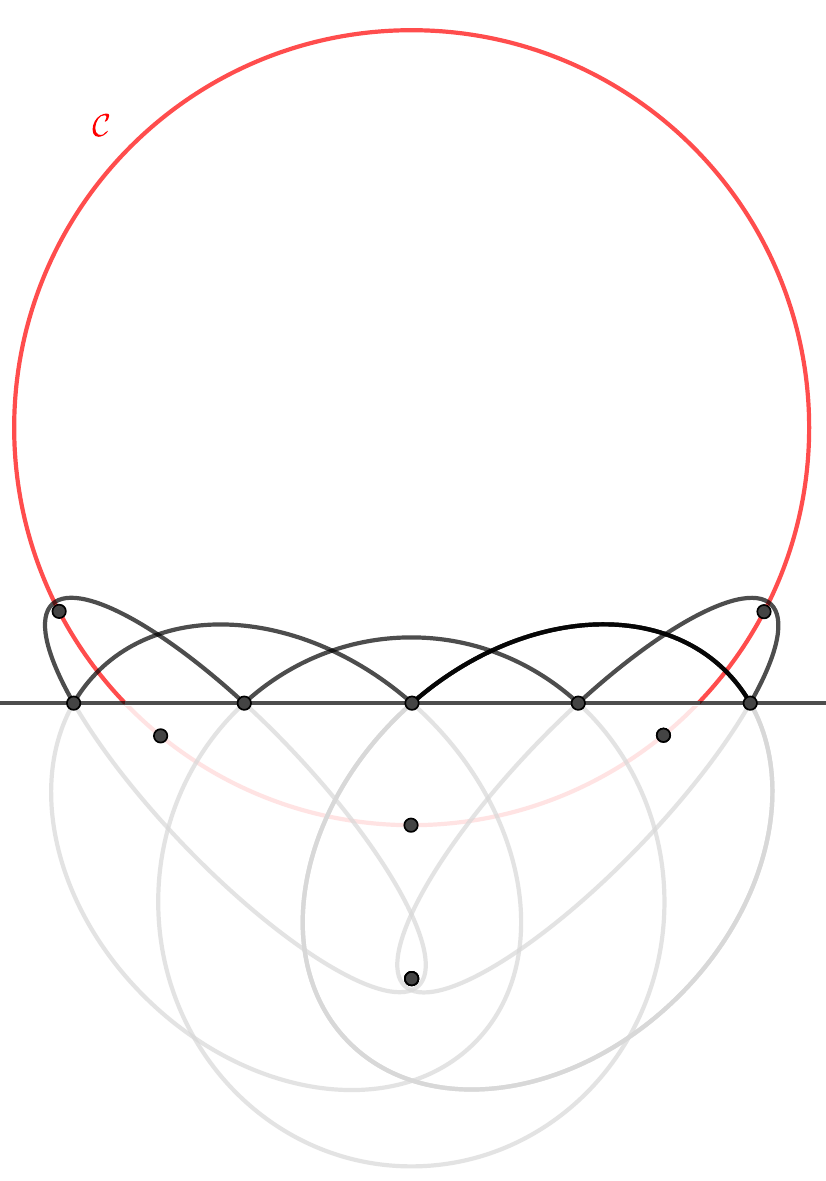} & \includegraphics[width=0.43\textwidth]{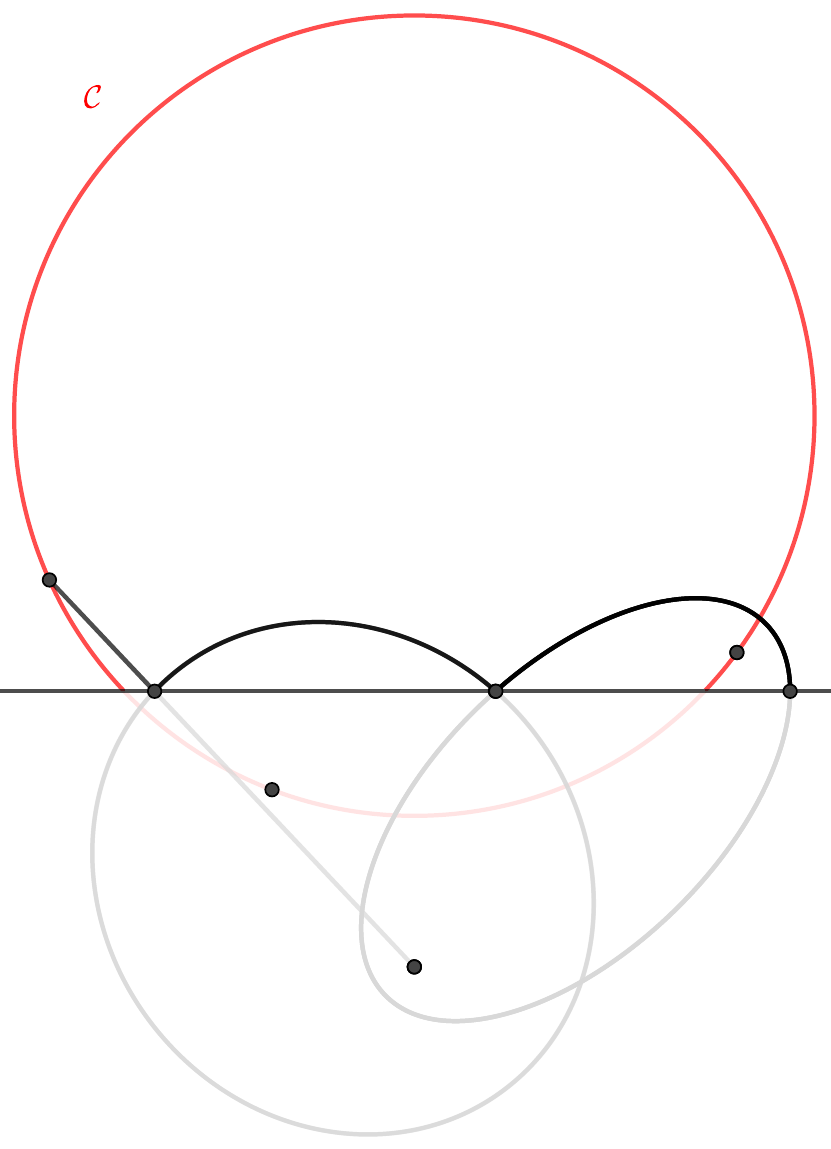}
	\end{tabular}
\caption{Period 5 trajectories corresponding to $E\approx-2/3$,  $D=\sqrt{3}$. The trajectory in the left-hand side is symmetric with respect to the vertical axis. In the right-hand side of the figure, the leftmost arc is close to the degenerate one.}
\label{fig:Period5}
\end{figure}
\end{example}

\begin{example}[Period 6]\label{ex:Period6}
The analytic condition $B_3 B_5 - B_4^2=0$ for $6$-periodic trajectories is equivalent to: 
\begin{align*}
0 &= \left[ 4(D^2-4)E^2 + 4D(D^2-3)E+D^4-2D^2-3 \right] \left[(D^2+2DE-1)^2-4(D+2E)^2 \right] \\
&\;\;\;\;\; \times \left[-1+(D^2-4)(D+2E)^2 ((3D^2-4)(D+2E)^2+16E(D+2E)+6) \right].
\end{align*}
The first factor in the above expression is the condition for $3$-periodic trajectories, so we find the solutions from the other two factors. Two such examples are shown in Figure \ref{fig:Period6}.
\begin{figure}[ht]
	\centering
\includegraphics[width=0.43\textwidth]{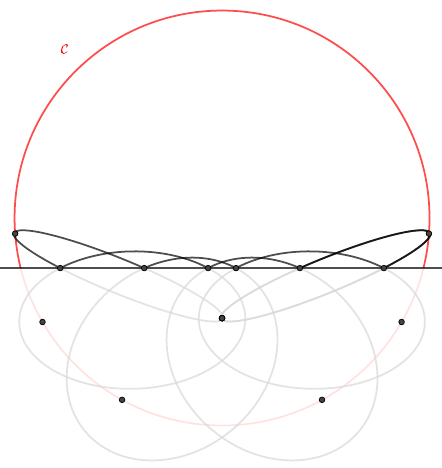} 
\hskip0.5cm
 \includegraphics[width=0.43\textwidth]{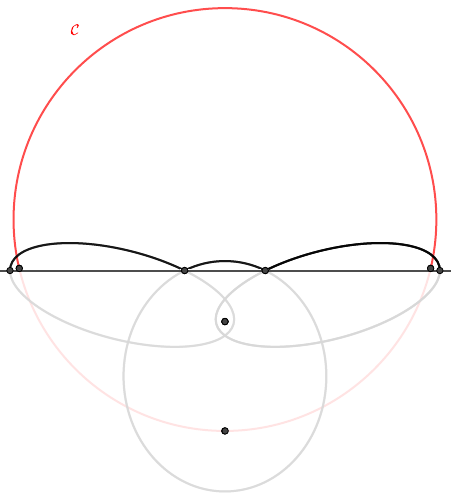}
\caption{Period 6 trajectories corresponding to $(E,D) = (-31/140,4/5)$. }
\label{fig:Period6}
\end{figure}
\end{example}

\begin{remark}
The trajectories in the right-hand sides of Figures \ref{fig:Period4} and \ref{fig:Period6} meet the wall at a right angle at its leftmost and rightmost points. These \emph{perpendicular trajectories} are fixed in the involution $j$ of \cite{F21} and occur when $A_2 = \frac{2E}{1\pm R}$.

\end{remark}

\section{Geometric properties of the Boltzmann system}\label{sec:geom}

In planar elliptical billiards, each trajectory has \emph{a caustic}, that is a curve which is touching all segments of that trajectory.
Moreover, \emph{the focal property} also holds: if a segment of a given trajectory contains a focus of the boundary, then then the next segment will containing the other focus.
In this section, we will prove that the trajectories of the Boltzmann system have caustics and that the focal property can also be formulated and proved in this case.

The first step is to establish that for any Kepler conic $\K$, there are particular confocal conics that are tangent to $\K$. See Figure \ref{fig:CausticIllustration}, left. 

\begin{proposition}\label{prop:kepler-caustic}
The Kepler conic $\K$ given by \eqref{eq:KeplerConic} is touching two unique conics with foci $\O(0,0)$ and $\F(0,2)$, which are given by:
\begin{equation}\label{eq:GeneralCaustic}
	\E_{\pm}: \quad \frac{x_1^2}{\left(\frac{R\pm1}{2E} \right)^2 -1} + \frac{(x_2-1)^2}{\left(\frac{R\pm1}{2E} \right)^2} =1.
\end{equation}
Moreover, we have:
\begin{itemize}
\item the points of tangency of $\K$ and $\E_{\pm}$ are
$B_{\pm} = \left(A_1\alpha_{\pm}, 2+(A_2-2E)\alpha_{\pm}\right)$, with
\begin{equation*}
	\alpha_{\pm}=
	\frac{(4E^2-(R\pm1)^2) (R(R\pm1)-2E(A_2-2E))}{2ER^2(4E^2-(R\pm1)^2)-8A_1^2E^3};
\end{equation*}
\item the slope of the joint tangent line to $\K$ and $\E_{\pm}$ at $B_{\pm}$ is:
\begin{equation*}
	m_{\pm} = \frac{A_1(R\pm1)(2E(A_2 - 2E)-(R\pm1)R)}{2A_1^2 E (R\pm1) - R(A_2 - 2E)(4E^2- (R\pm1)^2)};
\end{equation*}
\item the points $B_{\pm}$, $\F$ and the non-origin focus $F_2$ of $\K$ are collinear and they lie on the following line:
\begin{equation*}
	\L: \quad (2E-A_2)x_1 + A_1 x_2 = 2 A_1. 
\end{equation*}
\end{itemize}
\end{proposition}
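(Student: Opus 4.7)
The plan is to exploit the reflection property of confocal conics. Every conic with foci $\O(0,0)$ and $\F(0,2)$ is of the form $\frac{x_1^2}{a^2-1}+\frac{(x_2-1)^2}{a^2}=1$, and at each of its points $P$ the normal bisects the angle between the focal rays $P\O$ and $P\F$ (internally for an ellipse, externally for a hyperbola). Likewise, at each $P\in\K$ the normal bisects the corresponding angle between $P\O$ and $PF_2$, where $F_2=(A_1/E,A_2/E)$. If a confocal conic $\E$ is tangent to $\K$ at $P$, these two normal lines must coincide; a short calculation with the ray angles at $P$ shows this forces $\F$ and $F_2$ to lie on a common line through $P$, regardless of which bisector (internal or external) is involved on each side. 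Conversely, at every point $B\in\K$ lying on the line $\L$ through $\F$ and $F_2$, the unique conic with foci $\O$, $\F$ passing through $B$ is tangent to $\K$ at $B$. The proposition therefore reduces to computing $\K\cap\L$ and then identifying, at each intersection point, the corresponding confocal caustic and the common tangent line.

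The direction $F_2-\F=(A_1,A_2-2E)/E$ immediately yields the stated equation $(2E-A_2)x_1+A_1x_2=2A_1$ for $\L$, proving the last item. Parametrize $\L$ by $P(\alpha)=(A_1\alpha,\,2+(A_2-2E)\alpha)$ and substitute into \eqref{eq:KeplerConic}. Using the identities
\[
A_1^2+(A_2-2E)^2 = R^2, \qquad A_1^2+A_2^2-4EA_2 = 1+2DE
\]
(the first from $R^2=1+2DE+4E^2$, the second the defining equation of the level set written just before \eqref{eq:involutions}), the Kepler relation collapses to a quadratic in $\alpha$ whose two roots are exactly the stated $\alpha_\pm$, producing the tangency points $B_\pm$.

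To identify the caustics, we compute $|B_\pm\F|=R|\alpha_\pm|$ directly from $P(\alpha)-\F=\alpha(A_1,A_2-2E)$, and $|B_\pm\O|$ from the focal form of $\K$. Combining them under the appropriate sign choices yields $|B_\pm\O|+|B_\pm\F|=(R\pm1)/E$, so $B_\pm$ lies on the confocal conic with semi-major axis $a_\pm=(R\pm1)/(2E)$, which is $\E_\pm$ of \eqref{eq:GeneralCaustic}. The slope $m_\pm$ of the common tangent at $B_\pm$ is then obtained by implicit differentiation of \eqref{eq:KeplerConic} (or of \eqref{eq:GeneralCaustic}) at $B_\pm$ and simplifying once more with the two identities above.

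The only real obstacle is algebraic bookkeeping: the explicit formulas for $\alpha_\pm$, $a_\pm$, and $m_\pm$ in the statement look unwieldy, but they collapse systematically once $R^2=A_1^2+(A_2-2E)^2$ and $A_1^2+A_2^2=1+2DE+4EA_2$ are used to eliminate $R^2$ in favour of the Laplace--Runge--Lenz coordinates and vice versa. Some minor care is needed to distinguish whether each caustic $\E_\pm$ is an ellipse or a hyperbola (the sign of $a_\pm^2-1$), but both the reflection-property argument and the subsequent computation are uniform in the two cases.
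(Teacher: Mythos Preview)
Your proposal is correct and, in fact, more informative than the paper's own proof, which consists of the single sentence ``Follows by a straightforward calculation.'' Where the paper presumably verifies each bullet by brute-force elimination (intersecting $\K$ with a generic confocal conic, imposing a double root, and then checking the remaining formulas), you begin with the optical property of conics to explain \emph{why} the tangency points must lie on the line through $\F$ and $F_2$: since the shared normal at a point of tangency must simultaneously bisect $\angle \O P F_2$ and $\angle \O P \F$, reflecting the common ray $P\O$ across that normal forces the rays $PF_2$ and $P\F$ to coincide or be opposite, hence $P\in\L$. This turns the collinearity item from an a posteriori check into the organizing principle, after which the parametrization $P(\alpha)=(A_1\alpha,\,2+(A_2-2E)\alpha)$ and the identity $A_1^2+(A_2-2E)^2=R^2$ reduce the rest to a single quadratic and focal-distance bookkeeping. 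The trade-off is that the paper's approach requires no geometric insight at all, while yours front-loads a short synthetic argument and then does strictly less algebra; your route also makes the uniqueness claim transparent (exactly two roots of the quadratic on $\L$), which a pure calculation leaves implicit. One small point worth tightening: when you write $|B_\pm\O|+|B_\pm\F|=(R\pm1)/E$, make the sign conventions explicit (for $E<0$ and for the hyperbolic caustic the relevant combination is a difference, and the right-hand side should be taken in absolute value), so that the identification with $a_\pm=(R\pm1)/(2E)$ is unambiguous.
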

\begin{proof} Follows by a straightforward calculation. 
\end{proof}

Now, Proposition \ref{prop:kepler-caustic} directly implies the existence of caustics for the Boltzmann system.

\begin{theorem}\label{th:caustics}
For each fixed pair $(E,D)$ satisfying the conditions \refeq{eq:singular-conditions} there are two unique conics $\E_+$ and $\E_-$ with foci $\O(0,0)$ and $\F(0,2)$ such that all arcs of each trajectory on the level set $X(E,D)$ are touching those two conics.
\end{theorem}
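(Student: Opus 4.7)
The plan is to derive the theorem as an essentially immediate consequence of Proposition \ref{prop:kepler-caustic}, once one makes the following key observation: inspecting the explicit formula \eqref{eq:GeneralCaustic}, the semi-axes $(R\pm 1)/(2E)$ and $\sqrt{((R\pm 1)/(2E))^2-1}$ of the two tangent conics $\E_{\pm}$ depend \emph{only} on $E$ and $R$, and are independent of the Laplace--Runge--Lenz components $A_1, A_2$ that single out a particular Kepler arc within the trajectory.

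Fixing $(D,E)$ subject to \eqref{eq:singular-conditions} and setting $R=\sqrt{1+2DE+4E^2}$, I note that $R$ is itself determined by the pair $(D,E)$. Every arc of every trajectory on the level set $X(D,E)$ is a Kepler conic $\K$ of the form \eqref{eq:KeplerConic} with one focus at $\O(0,0)$ and second focus $F_2$ lying on the common circle $\C$ of radius $R/|E|$ centred at $\F(0,2)$. Applying Proposition \ref{prop:kepler-caustic} to any such arc $\K$ yields the same pair $\E_+$ and $\E_-$, since by the observation above these two conics are completely determined by $E$ and $R$. Concatenating arcs along a whole Boltzmann trajectory, every arc of every trajectory on $X(D,E)$ is tangent to both $\E_+$ and $\E_-$, and uniqueness for the level set is inherited verbatim from the uniqueness statement in Proposition \ref{prop:kepler-caustic} applied to a single arc.

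The one point that deserves attention is checking that the non-degeneracy conditions \eqref{eq:singular-conditions} guarantee that the two caustics are well-defined, real, and genuinely distinct. The condition $1+2DE+4E^2\neq 0$ ensures $R\neq 0$, hence $\E_+\neq\E_-$; combined with $D+2E\neq 0$ and $D^2\neq 4$, it also rules out the degenerate limits in which the denominators in \eqref{eq:GeneralCaustic} vanish or one of the conics collapses. Beyond this routine bookkeeping the content of the theorem is contained in Proposition \ref{prop:kepler-caustic}; in particular no separate verification at the reflection points on the wall is needed, because the fact that two consecutive arcs share the same $(D,E)$ already forces both to be tangent to the same fixed pair $\E_{\pm}$. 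If anything plays the role of a main obstacle, it is the algebraic verification underlying Proposition \ref{prop:kepler-caustic} itself; given that proposition, the present theorem is essentially a matter of reading its statement.
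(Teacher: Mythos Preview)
Your proposal is correct and follows essentially the same approach as the paper: the theorem is stated as a direct consequence of Proposition~\ref{prop:kepler-caustic}, with the implicit observation (which you make explicit) that the semi-axes in \eqref{eq:GeneralCaustic} depend only on $E$ and $R$, and hence only on the level-set data $(E,D)$. Your additional remarks on non-degeneracy are a useful elaboration but do not constitute a different route.
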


\begin{example}
The existence of caustics is illustrated in Figure \ref{fig:Caustic}, were $100$ iterations of the Boltzmann map for $(E,D) = (-7/24,7/4)$ are shown. 
The arcs of the trajectory have a hyperbolic caustic above the wall and the an elliptical caustic with tangencies both above and below the wall. 
\begin{figure}[ht]
		\centering
		\includegraphics[width=0.43\textwidth]{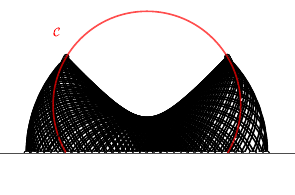} \hskip0.5cm \includegraphics[width=0.43\textwidth]{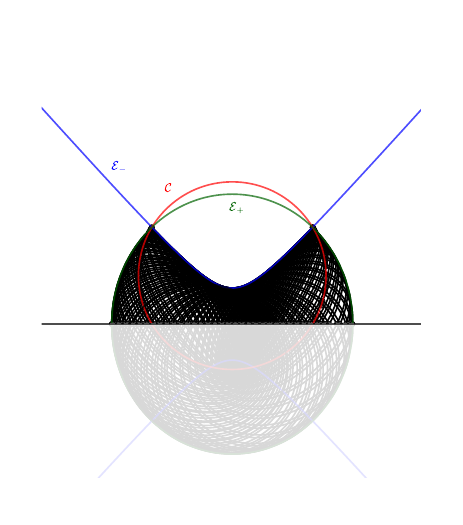}
		\caption{Caustics for the Boltzmann map. The right-hand figure shows the caustics $\E_\pm$ and the shaded full ellipses below the wall. }
		\label{fig:Caustic}
	\end{figure}
\end{example}

Next, we will show that the point which is symmetric to the centre with respect to the wall has the focusing property for the trajectories of the Boltzmann system, see Figure \ref{fig:focal}.

\begin{figure}[ht]
	\centering
	\includegraphics[width=0.5\textwidth]{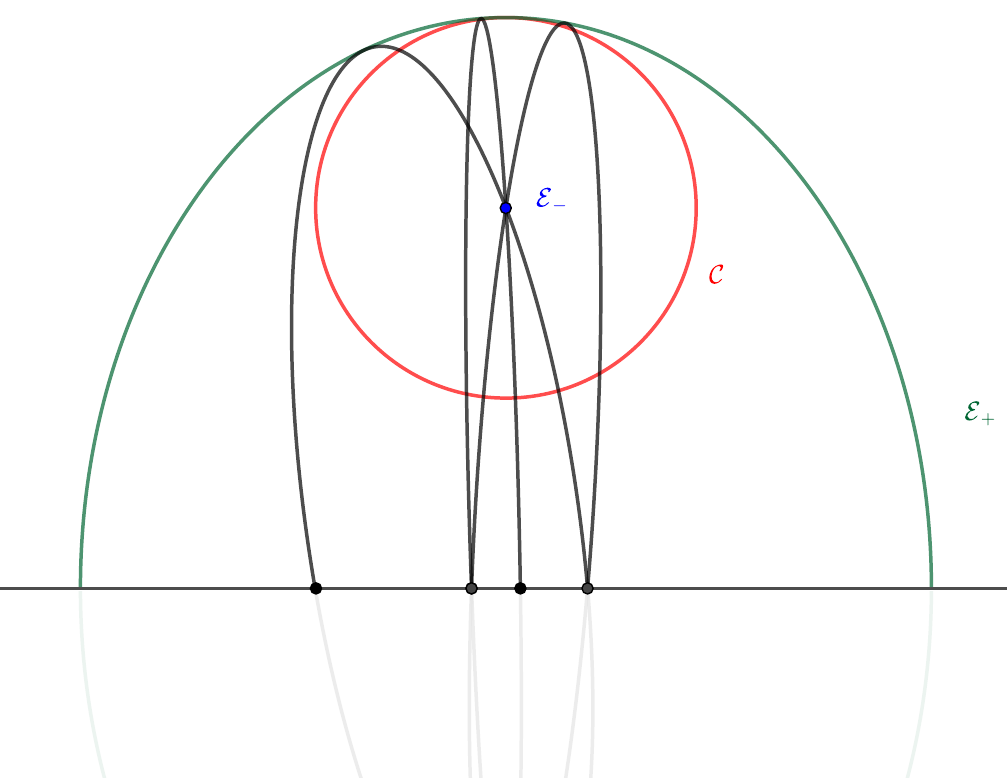} 
	\caption{Three consecutive arcs of a trajectory containing the point $\F(0,2)$, which is symmetric to the centre with respect to the wall.}
	\label{fig:focal}
\end{figure}

\begin{theorem}[Focal Reflection Property]\label{th:focal}
Suppose that an arc of a given trajectory of the Boltzmann systems contains the point $\F(0,2)$.
Then all arcs of that trajectory will also contain point $\F$. Furthermore, the trajectory asymptotically converges to the vertical axis $x_1=0$.
\end{theorem}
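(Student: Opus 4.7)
The plan is to prove the two assertions separately, exploiting that the focal condition turns out to be a constraint on the integral $D$ alone. For the first assertion, I substitute $(x_1,x_2)=(0,2)$ directly into the Kepler conic equation \eqref{eq:KeplerConic}:
$$
4 \;=\; \bigl(D+2A_2-A_1\cdot 0-A_2\cdot 2\bigr)^2 \;=\; D^2.
$$
Hence an arc contains $\F$ if and only if $D^2=4$, a condition involving only the integral $D$. Since $D$ is preserved both by the Kepler flow between reflections and by the wall reflections themselves, this condition is invariant along the trajectory, and every arc therefore contains $\F$.

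For the asymptotic convergence, observe that $D^2=4$ is one of the degenerate cases excluded from \eqref{eq:singular-conditions}. A short computation with $D=\pm 2$ gives $R=|1\pm 2E|$ and, on the appropriate branch, $D+4E-2R=0$ (or $D+4E+2R=0$), forcing $k^2=0$ (or $k^2=\infty$) in \eqref{eq:curve}. The level set $X(D,E)$ thus degenerates to a rational curve, and the shift $u\mapsto u+\int_{-1}^{s_0}ds/y$ governing the Boltzmann map becomes, in the limit, a M\"obius transformation on the projective line with generically two fixed points. I plan to identify these fixed points geometrically as the two $1$-periodic perpendicular vertical orbits with $A_1=0$, reflection point $(0,1)$, and second focus in $\C\cap\{x_1=0\}$ (the fixed points of the involution $j$ noted in the remark after Example \ref{ex:Period6}), and to show that one of them is attracting.

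The main obstacle is verifying the hyperbolicity of the limit M\"obius map, i.e.\ that its multiplier $\lambda$ satisfies $|\lambda|\neq 1$, so as to rule out the elliptic and parabolic cases. The cleanest route is to evaluate the limiting shift at $k^{2}=0$: the integral $\int_{-1}^{s_{0}}ds/\sqrt{1-s^{2}}$ acquires non-zero imaginary part exactly when $s_{0}=(D+2E+R)/(D+2E-R)\notin[-1,1]$, which holds generically on $D^2=4$, giving $|\lambda|\neq 1$. Equivalently one may linearise $j\circ i$ via \eqref{eq:involutions} at the perpendicular fixed points and check that the resulting eigenvalue has modulus different from $1$. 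Once this is established, standard hyperbolic M\"obius dynamics yields convergence of every non-equilibrium orbit to the attracting fixed point, which translates geometrically into $x_n\to 0$ for the successive reflection points and the collapse of the Kepler arcs onto the vertical segment between $\O$ and $\F$, as claimed.
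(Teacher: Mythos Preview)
Your treatment of the first assertion is correct and cleaner than the paper's. The paper deduces that all arcs contain $\F$ by noting that the caustic $\E_-$ of Theorem~\ref{th:caustics} degenerates to the focal pair $\{\O,\F\}$ exactly when $R-1=2E$, i.e.\ $D=2$; your direct substitution into \eqref{eq:KeplerConic} reaches the same invariant without invoking caustics. You should remark that only $D=2$ (not $D=-2$) is relevant, either because $\pazocal{R}$ lies in $\{D>0\}$ or because the unsquared Kepler relation $r=L^{2}-\mathbf{A}\cdot\mathbf{r}$ at $\F$ reads $2=D$ directly.

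For the asymptotic statement the two approaches diverge. The paper's argument is entirely elementary and synthetic: the string relation $|\O x_{i+1}|+|x_{i+1}F_i|=-1/E=|\O x_{i+1}|+|x_{i+1}F_{i+1}|$ for consecutive arcs $\K_i,\K_{i+1}$ at the shared reflection point $x_{i+1}$ gives $|x_{i+1}F_i|=|x_{i+1}F_{i+1}|$, so $F_i,F_{i+1}\in\C$ are the two intersections of $\C$ with a circle about $x_{i+1}$; the geometry of arcs through the centre $\F$ of $\C$ then forces the $x_2$-coordinates of the $F_i$ to be strictly increasing, hence convergent to the top of $\C$, which corresponds to the degenerate vertical orbit $(x,A_1,A_2)=(0,0,-1)$. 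No analysis of the curve \eqref{eq:curve} is needed.

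Your degeneration-to-M\"obius route is a legitimate alternative and fits the Jacobian-shift framework, but as written it remains a plan, and one detail is wrong. Of your two candidate ``perpendicular vertical'' fixed points with $A_1=0$ and $F_2\in\C\cap\{x_1=0\}$, only $(x,A_1,A_2)=(0,0,-1)$ (second focus at the top of $\C$) is actually fixed by $j\circ i$. The other ellipse, with $A_2=1+4E$ and $F_2$ at the bottom of $\C$, meets the wall at $x_0=\pm\sqrt{(3+4E)^2-1}\neq0$, and applying \eqref{eq:involutions} there yields $A_1'=2x_0(1+2E)/(x_0^2+1)\neq0$, so $j\circ i$ does not fix it. You would need to locate the genuine second fixed point of the limiting map on the singular curve $X(2,E)$ and then carry out the hyperbolicity check on the real locus; this is feasible but substantially more work than the paper's half-page monotonicity argument.
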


\begin{proof}
	The first part of this theorem follows from Theorem \ref{th:caustics}, when the caustic $\E_-$ given by \refeq{eq:GeneralCaustic} is degenerate, i.e.~$R-1=2E$, which implies that $D=2$.
	
	The corresponding Kepler ellipses have major axis $2a=-1/E$. 
	Denote by $x_0$ a point on the wall which belongs to one such ellipse $\K_0$,
	 and let $x_1$, $x_2$, $\ldots$ and $\K_1$, $\K_2$, \dots be the subsequent reflection points and arcs of the Boltzmann trajectory. 
The origin $\O$ is the focus of each elliptic arc $\K_i$ and we denote by $F_i$ the other focus. 
	
	By the caustic property, the trajectory starts at $x_0$, travels along the first Kepler ellipse $\K_0$, passes through the point $\F$ of the degenerate caustic $\E_1$ and intersects the wall at $x_1$. The ``string construction" applied to ellipses $\K_0$ and $\K_1$ implies $-1/E = |\pazocal{O}x_1| + |x_1 F_0|$ and $-1/E = |\pazocal{O} x_1| + |x_1 F_1|$ respectively. 
	Those equalities give $|x_1 F_0| = |x_1 F_1|$, so that we may think of $F_0$, $F_1$ as the intersection of $\C$ and a circle centred at $x_1$ with radius $R_{01}$. By the position of $x_1$, it is necessarily the case that the $x_2$-coordinate of $F_1$ is larger than the $x_2$-coordinate of $F_0$. 
	
	Repeating this process with $\K_1$ and $\K_2$ produces the next focus $F_2$ with larger $x_2$-coordinate than that of $F_1$. We find the $x_2$-coordinates of the $F_i$ are monotonically increasing and are bounded above by the top of the circle $\C$ at coordinates $(0,2-1/E)$. As the $F_i$ approach this point, the defining components of each Kepler ellipse $(x,A_1,A_2)$ approach $(0,0,-1)$, respectively, which corresponds to the vertical axis $x_1=0$.
\end{proof}

\section{The phase space}\label{sec:phase}

In this section, we will analyse the phase space of the Boltzmann system. 
While the discussion in \cite{F21} assumes complex values of $D$, $E$, we will consider only real values to correspond to the real motion in the Kepler problem. 
Moreover, we will consider only the part of the phase space containing bounded trajectories, i.e.~when the arcs of the trajectories are ellipses.
We note that the last assumption implies that each trajectory is bounded and has infinitely many reflections.
Boundedness is important for us, since then the level sets and the isoenergy manifolds in the phase space will be compact, which allows us to use the Fomenko invariants for the topological characterization.

Subsection \ref{sec:bifurcation} contains a brief review of the bifurcation set for the Boltzmann system, based on \cite{F21}.
Then, in Subsection \ref{sec:singular}, we provide the analysis of the motion on the singular level sets.
In Subsection \ref{sec:Fomenko}, we present the topological description of the compact isoenergy manifolds for the Boltzmann system, using Fomenko graphs.

\subsection{The bifurcation set}\label{sec:bifurcation}

The bifurcation set for the Boltzmann system can be represented in $(E,D)$-plane, with restrictions on $D$ and $E$ to produce real motion when the arcs of trajectories are ellipses or degenerate to straight segments. 
In particular, these restrictions determine an infinite region in the plane bounded by the curves $1+2DE + 4E^2=0$, $E=0$, $D+2E=0$, $D=2$, as shown in the right-hand side of Figure \ref{fig:CausticIllustration} \cite{F21}. We call this region $\pazocal{R}$.

As explained in Theorem \ref{th:caustics}, the caustics $\E_{\pm}$  are only dependent upon the values of $(E,D)$. 
The curve $\E_-$ is an ellipse for all $(E,D) \in \pazocal{R}$. However, the curve $\E_+$ is a hyperbola for $(E,D) \in \pazocal{R}$ with $D<2$, an ellipse for $(E,D) \in \pazocal{R}$ with $D>2$, and degenerate consisting of the two points $\O (0,0)$ and $\F(0,2)$ when $(E,D) \in \pazocal{R}$ with $D=2$ and $E>-\frac{1}{2}$.
The left-hand side of Figure \ref{fig:CausticIllustration}, shows an arc of a trajectory corresponding the level set $X(E,D)$.
\begin{figure}[ht]
	\centering
	\begin{tabular}{c c}
		\includegraphics[width=0.45\textwidth]{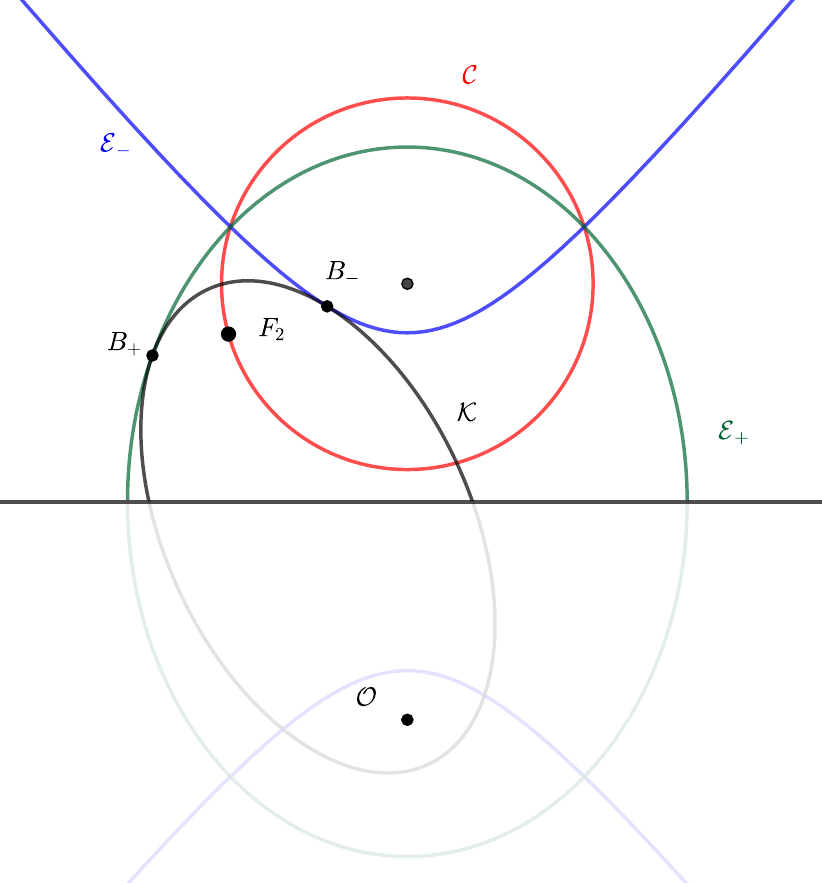} &		\includegraphics[width=0.45\textwidth]{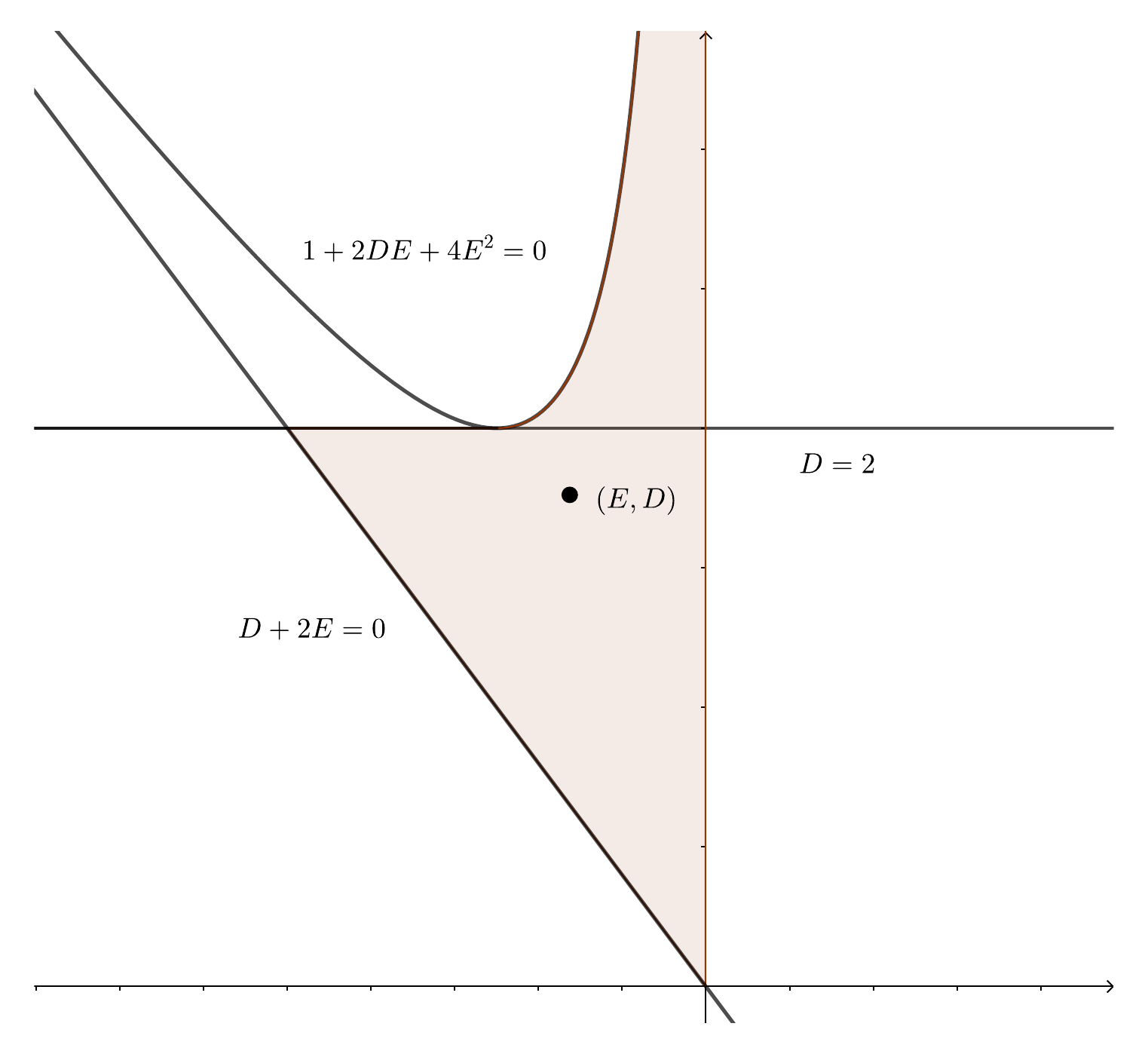}
	\end{tabular}
	\caption{The Kepler conic $\K$, caustics $\E_{\pm}$, foci, and the points $B_{\pm}$ (left); the $ED$-plane and the boundary curves of the shaded region $\pazocal{R}$ (right) which correspond to real motion. }
	\label{fig:CausticIllustration}
\end{figure}

\subsection{Singular level sets}\label{sec:singular}

The singular level sets of the Boltzmann system are placed on the boundary of the region $\pazocal{R}$ and within $\pazocal{R}$ on the line $D=2$. The next theorem describes the dynamics on those level sets.

\begin{theorem}\label{th:singular-level-sets}
The singular level sets in the phase space of the Boltzmann system consist of:
\begin{itemize}
	\item A single closed orbit corresponding to the limiting motion on the wall along the minor axis of the ellipse $\E_+$, for each $(E,D)$ such that $D+2E=0$ and $0<D<2$;
	\item A single closed orbit, corresponding to a $2$-periodic trajectory on an ellipse whose minor axis is placed along the wall, for each $(E,D)$ such that $1+2DE+4E^2=0$ and $D>2$;
	\item A single closed orbit corresponding to a periodic trajectory  lying on the $x_2$-axis and bounded by the point $(0,-1/E)$ when $D=2$ and $-1<E<-1/2$;
	\item A closed orbit and a separatrix when $D=2$ and $-1/2<E<0$. The closed orbit corresponds to a periodic trajectory lying on $x_2$-axis and bounded by the point $\F(0,2)$ and the wall, and the separatrix contains the trajectories with elliptic arcs that contain the point $\F(0,2)$.
\end{itemize}
\end{theorem}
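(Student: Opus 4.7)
The approach is to treat the four singular conditions one at a time, noting that each corresponds to a clean degeneration of some geometric object from Section \ref{sec:geom}: either the circle $\C$ of second foci, or one of the caustics $\E_\pm$, or the Kepler conic itself collapses. In every case the plan is to first identify which Kepler arcs remain admissible on the singular level set, and then to reconstruct the corresponding trajectories in configuration space, combining the caustic property of Theorem \ref{th:caustics} with the focal property of Theorem \ref{th:focal}.

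For $D+2E=0$ with $0<D<2$, the identity $R^2=1+2DE+4E^2$ gives $R=1$, so the caustic $\E_-$ of \eqref{eq:GeneralCaustic} degenerates to a point while $\E_+$ remains a proper ellipse whose minor axis lies on the wall $x_2=1$. The only motion compatible with both degenerations is the one contracting to that minor axis, producing a single closed sliding orbit. For $1+2DE+4E^2=0$ with $D>2$, the radius $R$ vanishes, so $\C$ collapses to the point $\F$ and every Kepler arc on the level set is an ellipse with foci exactly at $\O$ and $\F$; since these ellipses share the wall as the perpendicular bisector of $\O\F$, the endpoints of the minor axis sit on the wall and produce the unique $2$-periodic bouncing orbit.

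For the two cases on the line $D=2$, one computes $R=|1+2E|$ and observes that $\E_-$ degenerates to the segment $\O\F$; the admissible arcs are then either the fully degenerate radial motion on the $x_2$-axis or non-degenerate Kepler arcs passing through $\F$. Comparing the string condition $|\O\F|+|F_2\F|=-1/E$ with the fact that, when $D=2$, the distance $|F_2\F|$ is constantly equal to the radius $R/|E|$ of $\C$, one sees that non-radial arcs through $\F$ exist precisely when $E\geq-1/2$. This cleanly splits the case $-1<E<-1/2$, where only the radial closed orbit survives with turning point at $(0,-1/E)$ between the wall and $\F$, from the case $-1/2<E<0$, where a one-parameter family of arcs through $\F$ appears alongside the degenerate radial orbit.

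The main obstacle is this last case, where the family must be identified as a genuine separatrix attached to the closed orbit rather than a Liouville torus. The plan is to iterate the involution $j$ of \eqref{eq:involutions} on a trajectory whose first arc passes through $\F$; by the focal property every subsequent arc again contains $\F$, and the monotonicity argument already used in the proof of Theorem \ref{th:focal} forces the $x_2$-coordinates of the successive second foci to increase strictly and converge to the top of the circle $\C$. The corresponding Kepler ellipses then collapse in the limit onto the degenerate radial trajectory on the $x_2$-axis, establishing the separatrix structure and completing the topological picture of the singular level set.
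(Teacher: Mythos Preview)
Your approach is correct and genuinely different from the paper's. Where the paper handles each singular case by direct algebraic computation---intersecting the Kepler conic with the wall and reading off the discriminant to detect tangency when $D+2E=0$, working through three sub-cases with the explicit involutions $i,j$ of \eqref{eq:involutions} to pin down the $2$-periodic orbit when $1+2DE+4E^2=0$, and solving the simultaneous system for $\C$ and the circle $|x|=1/|E|$ to locate the degenerate Kepler conics when $D=2$---you instead recycle the caustic machinery of Section~\ref{sec:geom}, reading each degeneration off from \eqref{eq:GeneralCaustic} and the radius of $\C$. Your string-condition argument (that $\F$ lies on a Kepler arc iff $|\O\F|+|F_2\F|=-1/E$, i.e.\ $2+R/|E|=1/|E|$, forcing $R=1+2E\ge0$) is a particularly clean way to split the two $D=2$ sub-cases, and invoking the monotonicity from Theorem~\ref{th:focal} for the separatrix is exactly right and more conceptual than the paper's treatment. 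Two small points to tighten: first, for $D=2$ with $-1<E<-1/2$ one has $R=-(1+2E)$, and plugging into \eqref{eq:GeneralCaustic} shows it is $\E_+$, not $\E_-$, that collapses onto the segment $\O\F$ in that range (this does not affect your conclusion, since the relevant caustic still degenerates to that segment). Second, your first case is the sketchiest: saying $\E_-$ ``degenerates to a point'' and that therefore the motion contracts to the minor axis of $\E_+$ skips the mechanism; one extra line showing that the Kepler arcs become tangent to the wall (which the paper does via the discriminant of the intersection) would close that gap.
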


\begin{proof}
	The singular level sets correspond to the values $E$, $D$ which do not satisfy the inequalities \refeq{eq:singular-conditions}.
	We consider separately each of the three possible cases.

\paragraph*{First, we assume $D+2E=0$.}

By setting $x_2 =1$, we can find the $x_1$-coordinates of the reflection points of the particle with the wall:
$$
x_1^\pm = \frac{-A_1(A_2+D) \pm L \sqrt{D+2E}}{1-A_1^2},
$$
for $A_1^2 \neq 1$. 
From there,   the Kepler conics will be hyperbolas tangent to the wall from above when $E>0$; or parabolas and ellipses tangent to the wall from below for $E \leq 0$ if and only if $\Delta =0$.

Thus, the limiting motion for the Boltzmann system, whenever $E\le0$, will be along the minor axis of the limiting caustic $\E_+$, see Figure \ref{fig:Fomenko1a}.
\begin{figure}[ht]
	\centering
	\includegraphics[width=0.5\textwidth]{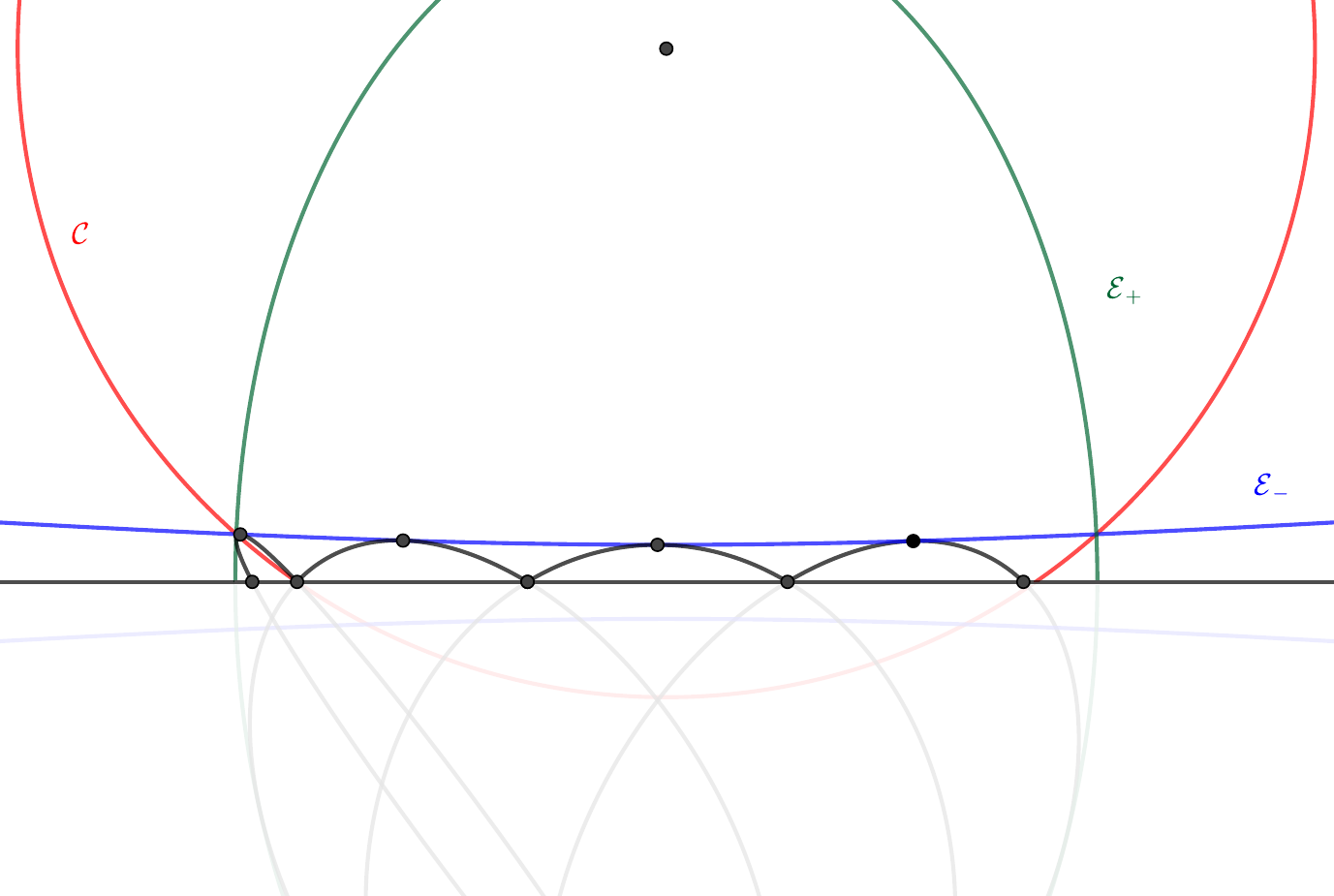} 
	\caption{Four iterations of the Boltzmann map with $(E,D)$ near $D+2E=0$ with $-1<E<-\frac{1}{2}$. }
	\label{fig:Fomenko1a}
\end{figure}

\paragraph*{Second, we assume $1+2DE+4E^2=0$.}
Under this assumption, the Kepler conics are ellipses orthogonal to the wall at both intersection points, i.e.~their minor axes lie on the wall $x_2 =1$ and major axes lie on the vertical coordinate axis, $x_1=0$. See Figure \ref{fig:Period2FamilyFig}.

We can derive this condition using the involutions $i$, $j$, see \eqref{eq:involutions}. First, a necessary condition for such a 2-periodic ellipse is $x_1^+ = -x_1^-$, or equivalently, $i(x_1,A_1, A_2) = (-x_1, A_1, A_2)$. This means
\begin{equation}
	-\frac{2(A_2+D)A_1}{1-A_1^2} -x_1 = -x_1  \iff A_1 =0 \text{ or } A_2 = -D,
	\label{eq:iCondition}
\end{equation}
which correspond to conics whose intercepts with the wall are symmetric about $x_1=0$. To further correspond to a 2-periodic trajectory, we seek fixed points of $j$:
\begin{equation}
	j(x_1,A_1,A_2) = (x_1,A_1,A_2) \iff A_1 = x_1(2E-A_2).
	\label{eq:jCondition}
\end{equation}
Satisfying equations (\ref{eq:iCondition}) and (\ref{eq:jCondition}) leads to several possibilities. \\

\emph{Case 1: Suppose $A_1=0$ and $x_1 \neq 0$.} Then $A_2 = 2E$, and the second focus of the conic (\ref{eq:KeplerConic}) is $F_2 = (0,2)$, and the equation of the conic simplifies to
\begin{equation}
	\frac{x_1^2}{-\frac{D}{2E}-2} + \frac{(x_2-1)^2}{\frac{1}{4E^2}}=1.
	\label{eq:Case1a}
\end{equation}
By equation (3) of \cite{F21}, these conditions also mean $
R^2=0 \iff 1+2DE + 4E^2 =0$. This matches Felder's 2-periodic condition and is consistent with the geometric description that the second focus $F_2$ in the Boltzmann system lies on a circle of radius $R/|E|$ centred at $\F(0,2)$. In turn, equation (\ref{eq:Case1a}) becomes  
\begin{equation}
	\frac{x_1^2}{\frac{1}{4E^2}-1} + \frac{(x_2-1)^2}{\frac{1}{4E^2}}=1.
	\label{eq:Case1b}
\end{equation}
This equation represents an ellipse in the $(x_1,x_2)$ plane for $-\frac{1}{2} < E < \frac{1}{2}$ and $E \neq 0$. Moreover, in the $(E,D)$-plane, the curve $1+2DE+4E^2=0$ is a hyperbola with asymptotes $D+2E=0$ and $E=0$, and branches lying in the second and fourth quadrants. Further assuming $D+2E>0$ to ensure two distinct intersection points of the ellipse with the wall, this forces the pair $(E,D)$ to lie on the branch in the second quadrant. Therefore we have a 1-parameter family of ellipses corresponding to 2-periodic trajectories given by equation (\ref{eq:Case1b}) for $-\frac{1}{2}<E<0$. This family of ellipses approaches a degenerate ellipse (or segment connecting the foci) as $E \to -\frac{1}{2}^+$ and approaches an ellipse of unbounded major and minor axes as $E \to 0^-$. 
\begin{figure}[ht]
	\centering
	\includegraphics[width=0.5\textwidth]{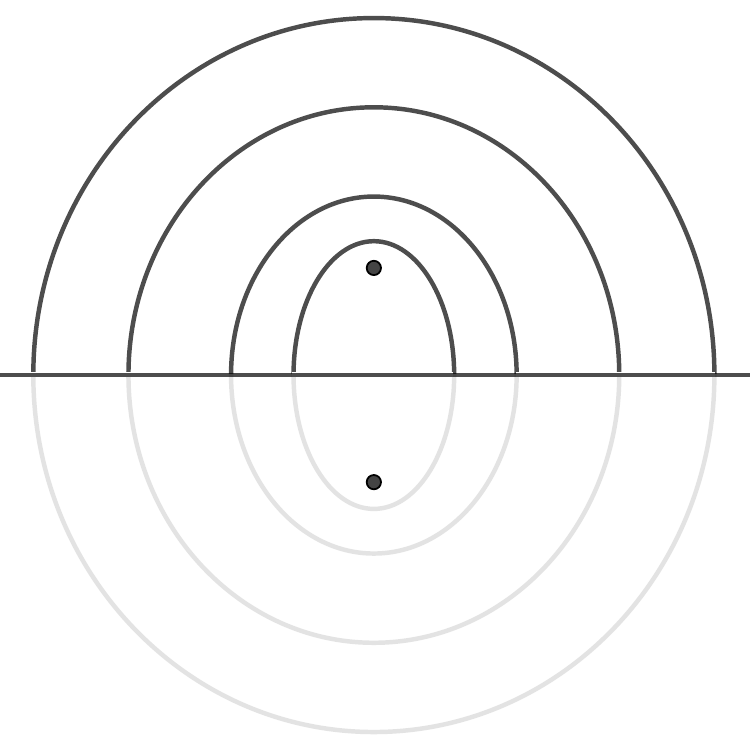}
	\caption{A family of 2-periodic ellipses given by equation (\ref{eq:Case1b}) and varying $E$ corresponding to different 2-periodic trajectories. }
	\label{fig:Period2FamilyFig}
\end{figure}

\emph{Case 2: Suppose $A_1=0$ and $x_1=0$.} The Kepler conic equation (\ref{eq:KeplerConic}) reduces to 
\begin{equation}
	x_1^2+x_2^2 = (2A_2+D-A_2x_2)^2
	\label{eq:Case2a}
\end{equation}
and must pass through the point $(0,1)$, which implies $A_2 = -D \pm 1$. However, this reduces equation (3) of \cite{F21} to 
$$
(D+2E)(D\pm2)=0,
$$
with $D$ having the opposite sign of $A_2$. By assuming $D+2E>0$, both possibilities turn equation (\ref{eq:Case2a}) into $x_1=0$. Thus the only such conic is the degenerate conic $x_1=0$. Dynamically, this can be seen as the particle repeatedly bouncing directly up and down with no component of motion to the left or right. \\

\emph{Case 3: Suppose $A_2 = -D$.} Since $L^2=D+2A_2 = -D$, we have $D <0$, which does not belong to the region $\pazocal{R}$.

\paragraph*{Third, assume $D^2=4$.} 
Since $D$ is positive within the bifurcation set $\pazocal{R}$, we have $D=2$. 
In the Boltzmann system, the Kepler conic $\K$ is an ellipse whose foci are $F_1 = (0,0)$ and $F_2 = (A_1/E,A_2/E)$, and whose major axis has length $1/|E|$. The second focus $F_2$ lies on a fixed circle $\C$ of radius $R/|E|$ centred at $\F(0,2)$. The degenerate Kepler ellipses will occur when, as $F_2$ varies along $\C$, the length of the minor axis approaches 0, which occurs if and only if $F_2$ is a distance $1/|E|$ from the origin. Thus we seek solutions to the system
$$
x_1^2 + (x_2-2)^2 = \frac{1+2DE+4E^2}{E^2} \quad \text{ and } x_1^2+x_2^2 = \frac{1}{E^2}.
$$
The solutions are $(x_1,x_2) = \left(\pm \frac{\sqrt{4-D^2}}{2E},-\frac{D}{2E} \right).$ Thus the line $x_2=-\frac{D}{2E}$ is a line which can intersect $\K$ in zero, one, or two points; depending on the location of $F_2$ relative to this line, the Kepler conic (\ref{eq:KeplerConic}) will be an ellipse, hyperbola, or degenerate. See Figure \ref{fig:DTransitions}.
\begin{figure}[ht]
	\centering
	\begin{tabular}{c c c}
		\includegraphics[width=0.3\textwidth]{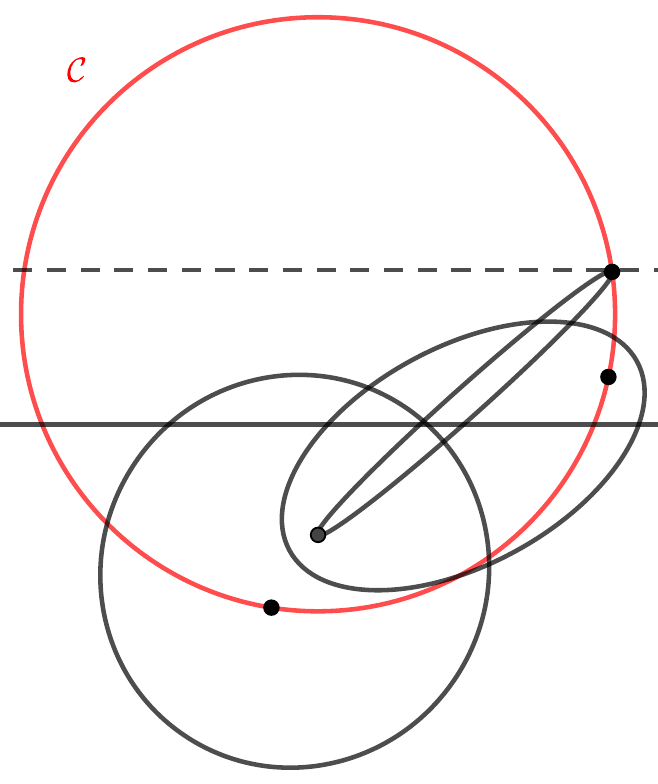} & \includegraphics[width=0.3\textwidth]{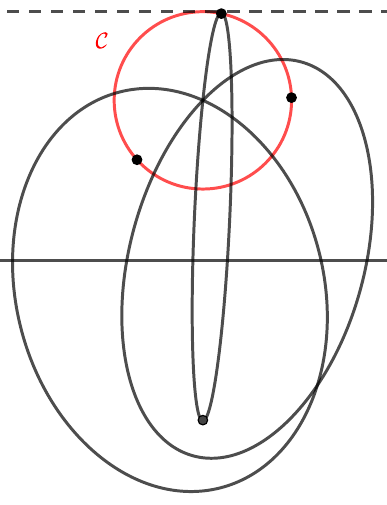} & \includegraphics[width=0.3\textwidth]{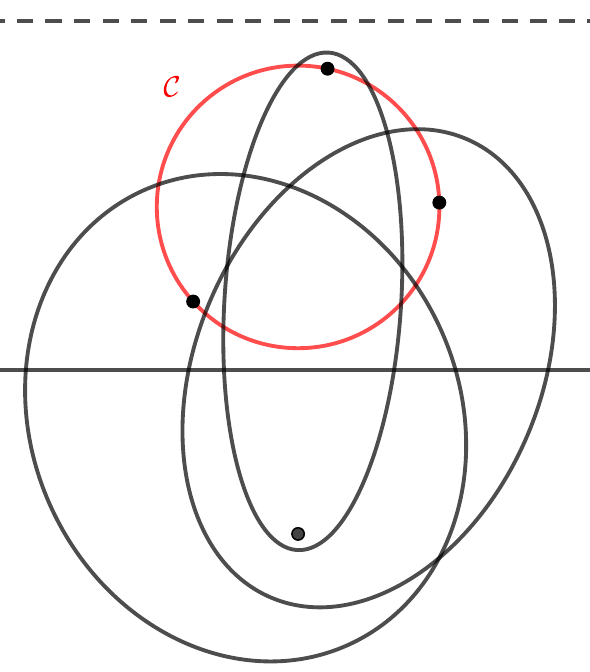}
	\end{tabular}
	\caption{Several Kepler ellipses corresponding to $D<2$ (left); $D=2$ (center); $D>2$ (right) and $-1/2<E<0$. In each figure, the dashed line is $x_2 = -\frac{D}{2E}$, the solid line the wall $x_2=1$. }
	\label{fig:DTransitions}
\end{figure}

The condition $D=2$ corresponds to the case when the line $x_2 = -\frac{D}{2E}$ is tangent to the circle $\C$ at the point $\left(0, -\frac{D}{2E} \right)$. As such, all points except the point of tangency are allowable locations for $F_2$ and the Kepler conic is an ellipse. 
\end{proof}

%ovde
\subsection{Topological description of isoenergy manifolds}\label{sec:Fomenko}

In this section we give a topological description of the isoenergy manifolds for the Boltzmann system using the topological invariants developed by Fomenko and his school, see \cites{BMF1990, BF2004, BBM2010} and references therein.
Those invariants can be used for $3$-dimensional submanifolds of the phase space of integrable systems with two degrees of freedom.
The Liouville folitation of such submanifolds is represented by a graph, which is obtained by shrinking each leaf of the foliation to a point.
Thus, the smooth families of Liouville tori will create edges which connect together at vertices corresponding to the singular leaves. 
Each type of the singular leaf corresponds to a letter-atom.
To complete the topological description, each edge and some subgraphs are marked with rational and natural numbers.
A detailed account of those invariants, together with theoretical bakcground and examples can be found in \cite{BF2004}.

The Fomenko graphs were extensively used for studying the topology of integrable billiards: elliptical ones \cites{DR2009, DR2010}, within the domains bounded by confocal parabolas \cite{Fokicheva2014}, with Hooke's potential \cite{R2015}, in the Minkowski plane and on ellipsoids and the hyperboloid in the Minkowski space \cites{DR2017,DGR2022}, non-convex billiards \cite{Ved2019}, billiards with slipping \cite{FVZ2021}, and broader classes of systems and their bifurcations \cites{SRK2005,VK2018, FV2019, PRK2018,PRK2021}.
For the larger body of works on the topic, see also the references therein.

\begin{theorem}\label{th:fomenko}
The subsets of the phase space for the Boltzmann system corresponding to fixed negative values of $E$ are compact $3$-di\-men\-sio\-nal manifolds which are represented by the Fomenko graphs as shown in Figure \ref{FomenkoGraphs}.
\end{theorem}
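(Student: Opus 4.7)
The plan is to analyze, for each fixed $E<0$, the isoenergy surface $Q_E := \{H=E\}$ by slicing it with the second integral $D$ and assembling the resulting Liouville foliation into a Fomenko graph. First I would identify the range of admissible $D$-values for fixed $E<0$ by intersecting the horizontal line $E=\mathrm{const}$ with the region $\pazocal{R}$ from Section \ref{sec:bifurcation}: the boundary curves $D+2E=0$, $1+2DE+4E^2=0$, $D=2$, $E=0$ cut the line into a finite union of subintervals, and the location of the critical values depends on whether $E<-1$, $E=-1$, $-1<E<-1/2$, $E=-1/2$, or $-1/2<E<0$. These regimes should correspond one-to-one with the graphs in Figure \ref{FomenkoGraphs}. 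Compactness of $Q_E$ and its structure as a smooth $3$-manifold follow from the standing hypothesis that Kepler arcs are ellipses (so trajectories are bounded), together with the fact that once $E$ is fixed, the Laplace--Runge--Lenz vector lies on a compact circle and reflection points lie in a compact subinterval of the wall.

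Over each open subinterval of regular $D$-values, Theorem 2 of \cite{F21} recalled in Section 1 gives a smooth genus-one curve, and compactness upgrades the real level set $X(D,E)$ to a $2$-torus. Each such subinterval then becomes an edge of the Fomenko graph, with smooth dependence in $D$ coming from the smooth dependence of the involutions $i$, $j$ in \eqref{eq:involutions}.

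To each singular $D$-value I would then attach an atom using the dynamical picture catalogued in Theorem \ref{th:singular-level-sets}. A singular fiber that is a single closed orbit arising as the limit of a one-parameter family of tori is the atom $A$; this covers the boundary $D+2E=0$ (for $0<D<2$), the boundary $1+2DE+4E^2=0$ (for $D>2$), and the interior singular value $D=2$ with $-1<E<-1/2$. The remaining case $D=2$ with $-1/2<E<0$ consists of a closed orbit (the vertical trajectory on $x_1=0$ bounded by $\F(0,2)$ and the wall) together with a separatrix formed by the focal trajectories through $\F(0,2)$ from Theorem \ref{th:focal}; since Theorem \ref{th:focal} shows these separatrix orbits asymptotically wind onto the closed orbit, the local model is a figure-eight bundle over $S^1$, giving the atom $B$.

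Finally, to complete the Fomenko description I would compute the numerical marks $r$, $\varepsilon$ on each edge and the $n$-marks on the resulting families. On each regular torus a convenient basis is the pair formed by the continuous Kepler flow between consecutive reflections and the discrete reflection dynamics along the Boltzmann map, and the shift $u\mapsto u+\int_{-1}^{s_0} ds/y$ on the Jacobian of \eqref{eq:curve} provides the rotation number along each edge; gluing matrices at atoms are then read off from the standard local normal forms of $A$ and $B$, and the marks computed as in \cite{BF2004}. I expect the main difficulty to be the coherent orientation of basis cycles across the atom $B$ at $D=2$, $-1/2<E<0$, where the degeneracy of the caustic $\E_-$ from Proposition \ref{prop:kepler-caustic} and the focal asymptotics force one to use the symmetry of the trajectories with respect to $x_1=0$ in order to pair cycles correctly on either side of the singular level. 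Once these marks are established case by case, the resulting graphs match those in Figure \ref{FomenkoGraphs}.
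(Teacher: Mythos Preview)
Your overall strategy---fixing $E$, reading off the interval of admissible $D$ from the region $\pazocal{R}$, invoking \cite{F21} for the regular tori, and then labelling the vertices using Theorem \ref{th:singular-level-sets}---is exactly the approach the paper takes, and your treatment of the three $A$-atoms is correct.

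The substantive error is your identification of the singular level at $D=2$, $-\tfrac12<E<0$, as a $B$-atom. The paper shows it is an $A^*$-atom, and this is forced already by the combinatorics you set up: on either side of $D=2$ there is a \emph{single} one-parameter family of Liouville tori (one for $-2E<D<2$, one for $2<D$ up to $1+2DE+4E^2=0$), so the vertex has valence~$2$. A $B$-atom has three boundary tori and hence valence~$3$; the only complexity-one saddle atom with two boundary tori is $A^*$. Dynamically, the point you miss is that the ``figure-eight bundle over $S^1$'' you describe has nontrivial monodromy: the reflection symmetry $x_1\mapsto -x_1$ (equivalently, the focal property of Theorem \ref{th:focal}, which sends a trajectory through $\F$ to another trajectory through $\F$) identifies the two lobes of the figure-eight as one traverses the closed orbit, so the transverse $2$-atom is a M\"obius band rather than a pair of pants. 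This is precisely the mechanism that produces $A^*$ in the billiard inside a half-ellipse, to which the paper notes the system is Liouville equivalent in this energy range.

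As a consequence, the ``main difficulty'' you anticipate---orienting cycles coherently across a $B$-atom---does not arise in the form you expect; instead one must choose admissible bases compatible with the $A^*$ structure, and the paper does this by exploiting the geometry of the caustics $\E_\pm$ near each endpoint rather than via the Jacobian shift. Your plan for the marks would still go through once the atom is corrected, but you should compute them against the $A^*$ local model.
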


\begin{figure}[ht]
	\centering
	\begin{tikzpicture}[>=stealth']%,el/.style = {inner sep=3pt, align=left, sloped},em/.style = {inner sep=3pt, pos=0.75, sloped}]
		\tikzset{vertex/.style = {shape=circle,draw,minimum size=1.5em}}
		% vertices
		\node[vertex] (aa) at  (2.5,1.5) {A};
		\node[vertex] (ff) at  (2.5,-1) {A};
		\node[vertex] (bb) at  (7.5,1.5) {$\text{A}^*$};
		\node[vertex] (ee) at  (7.5,4) {A};
		\node[vertex] (jj) at  (7.5,-1) {A};
		%edges
		\path[->] 
		(ff) edge node[right]  {\footnotesize $\varepsilon=1$} node[left] {\footnotesize $r=0$} (aa) %left edge
		(bb) edge node[right]  {\footnotesize $\varepsilon=1$} node[left] {\footnotesize $r=0$} (ee) % right-upper edge
		(bb) edge node[right]  {\footnotesize $\varepsilon=1$} node[left] {\footnotesize $r=0$} (jj); % right-lower edge
		%circle
		\draw[dashed] (bb) circle (20pt) node[right=20pt]{\footnotesize$n=0$};
		%axes
		\draw (0,-3) -- (10,-3);
		\foreach \x in {0,5,10}
		\draw[shift={(\x,-3)},color=black] (0pt,3pt) -- (0pt,-3pt);
		\node[below] at (0,-3) {\footnotesize $E=-1$};
		\node[below] at (5,-3) {\footnotesize $E=-\frac{1}{2}$};
		\node[below] at (10,-3) {\footnotesize $E=0$};
		\draw (-1,-2) -- (-1,5);
		\foreach \y in {-1,1.5,4}
		\draw[shift={(-1,\y)},color=black] (-3pt,0pt) -- (3pt,0pt);
		\node[left] at (-1,-1) {\footnotesize $D+2E=0$};
		\node[left] at (-1,1.5) {\footnotesize $D=2$};
		\node[left] at (-1,4) {\footnotesize $1+2DE+4E^2=0$};
	\end{tikzpicture}
	\caption{Fomenko graphs corresponding to the isoenergy manifolds for the Boltzmann system with $E<-\frac{1}{2}$ (left) and for $-\frac{1}{2}<E<0$ (right). }
	\label{FomenkoGraphs}
\end{figure}
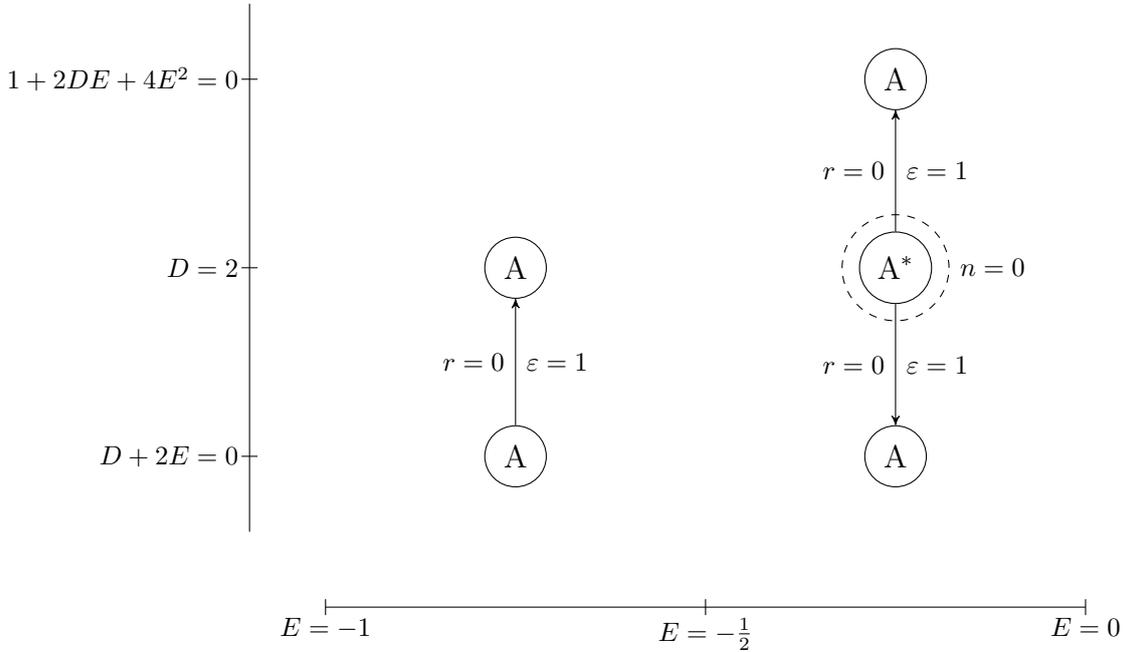

\begin{proof}
In order to determine the Fomenko graphs and the corresponding numerical invariants, we will analyse behaviour near the singular level sets.
	
\paragraph*{First, we consider the case $-1<E<-\frac{1}{2}$.}
For any pair $(E,D)$, such that $-1<E<-\frac12$ and $-2E<D<2$, the corresponding level set is a single Liouville torus, thus one edge of the Fomenko graph corresponds to those tori when $E$ is fixed and $D$ varies.
According to Theorem \ref{th:singular-level-sets},
each level set corresponding to $D\in\{2E,2\}$ consists of a single closed orbit, i.e.~the $A$-atom of the Fomenko graph.

Consider the nature of the Boltzmann trajectories near each of the boundary components. Near $D+2E=0$, the two branches of the caustic $\E_-$ are near the wall from above and below, keeping the arcs of the Kepler ellipses trapped vertically between the wall and $\E_-$, and horizontally within $\E_+$. These trajectories limit to the motion along the wall, between the vertices of the minor axis of $\E_+$, see Figure \ref{fig:Fomenko1a}. 

Near the upper bound $D=2$, the length of the minor axis of $\E_+$ shrinks to $0$.
The Kepler conics are bounded vertically between the wall and $\E_-$, and horizontally between the shrinking arcs of $\E_+$. 
These trajectories limit to a simple up-and-down 2-periodic trajectory between the wall at $(0,1)$ and the lowest point of $\C$, which has coordinates $(0,2-R)$. See Figure \ref{fig:Fomenko1b}. 

In order to calculate the numerical invariants, we need to choose two admissible bases on a Liouville torus corresponding to a point on the edge of the Fomenko graph. 
Each of those two bases is chosen accordingly to one of the singular level sets corresponding to the endpoints of the edge of the graph, and then the numerical invariants are calculated from the matrix of the transformation which maps one basis to the other one.
For details, see \cite{BF2004}.

In this case, one admissible basis, taken according to the singular level set with $D+2E=0$, can be chosen so that it consists of a preimage of a segment orthogonal on the wall and the segment placed on the wall.
The other admissible basis, taken according to the singular level set with $D=2$, can be chosen to consist of the preimages of the same segments, but in the reversed order.

\begin{figure}[ht]
\centering
\includegraphics[width=0.5\textwidth]{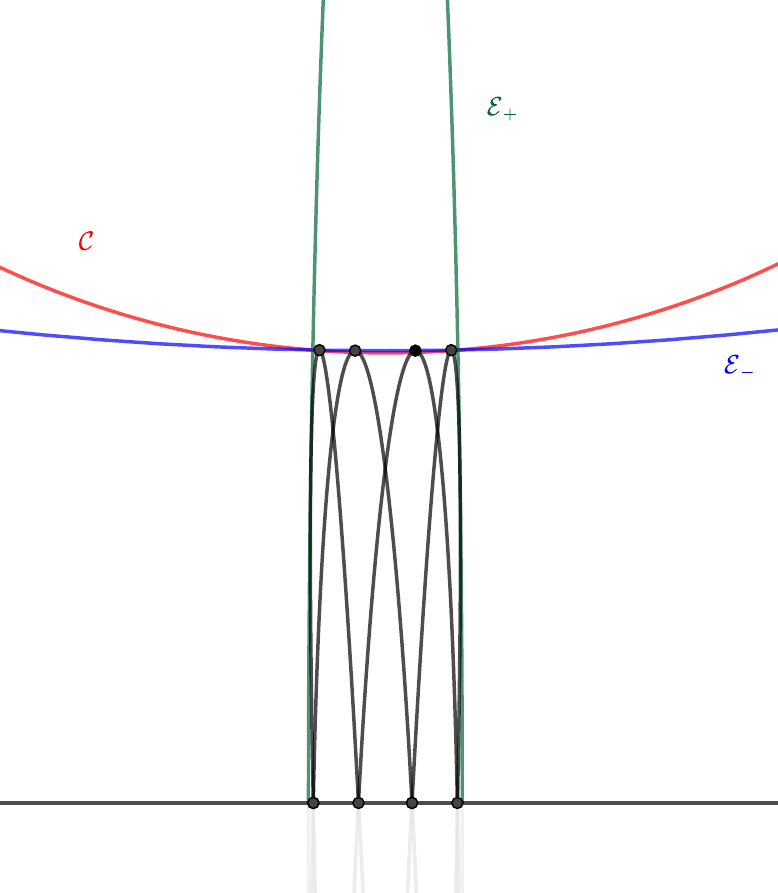} 
\caption{Four iterations of the Boltzmann map with $(E,D)$ near $D=2$ with $E<-\frac{1}{2}$. }
\label{fig:Fomenko1b}
\end{figure}

\paragraph*{Second, we analyse the case when $-\frac{1}{2}< E <0$.}
For any pair $(E,D)$, such that $-1/2<E<0$ and $D+2E>0$, $D\neq2$, $1+2DE+4E^2>0$, the corresponding level set is a single Liouville torus, thus the Fomenko graph has two edges: each one connecting the singular level set corresponding to $D=2$ with the two remaining singular level sets.
According to Theorem \ref{th:singular-level-sets},
that level set corresponds to the $A^*$-atom, while the two other level sets correspond to the $A$-atoms of the Fomenko graph.

Near the lower bound $D+2E=0$, the behaviour is the same as described in the previous case and shown in Figure \ref{fig:Fomenko1a}, thus the admissible basis can be chosen as in the previous case.
Near the boundary $D=2$, the caustic $\E_-$ narrows around the $x_2$-axis, as shown in the left of Figure \ref{fig:Fomenko2}.
\begin{figure}[ht]
	\centering
	\begin{tabular}{c c}
		\includegraphics[width=0.47\textwidth]{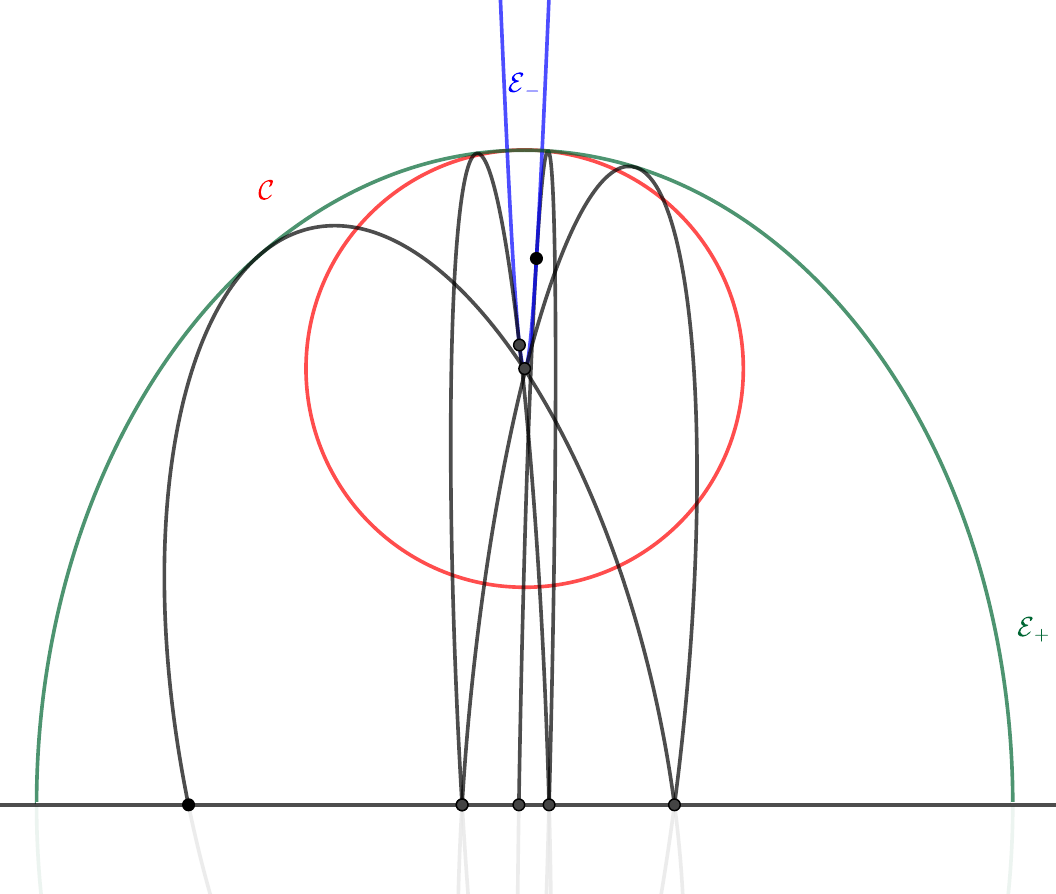} & \includegraphics[width=0.47\textwidth]{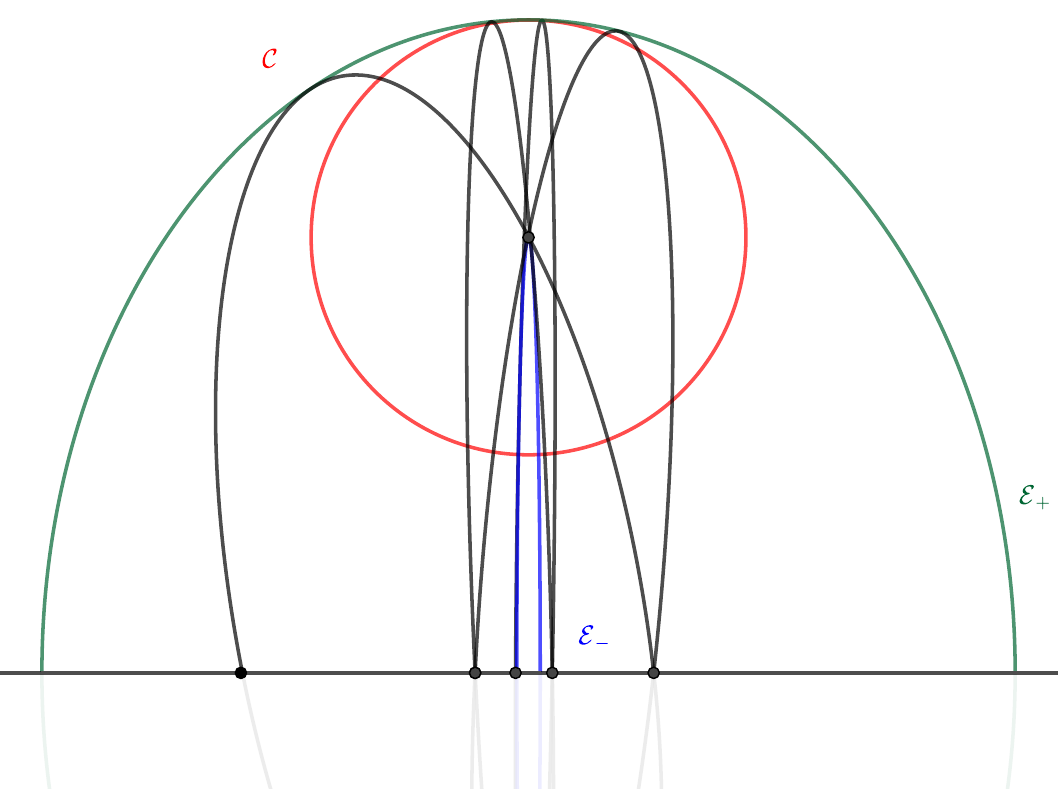}
	\end{tabular}
	\caption{Four iterations of the Boltzmann map with $(E,D)$ near $D=2$ with $-\frac{1}{2}\leq E < 0$ and $D<2$ (left) and $D>2$ (right). }
	\label{fig:Fomenko2}
\end{figure}
Near the upper boundary $1+2DE + 4E^2=0$, the radius of the circle $\C$ shrinks to 0, and the trajectories are trapped between the inner elliptic caustic $\E_-$ and the outer elliptic caustic $\E_+$. These trajectories limit to the 2-periodic trajectory which aligns with the upper half of the outer caustic $\E_+$, as shown in Figure \ref{fig:Fomenko3b}. 
\begin{figure}[ht]
	\centering
	\includegraphics[width=0.5\textwidth]{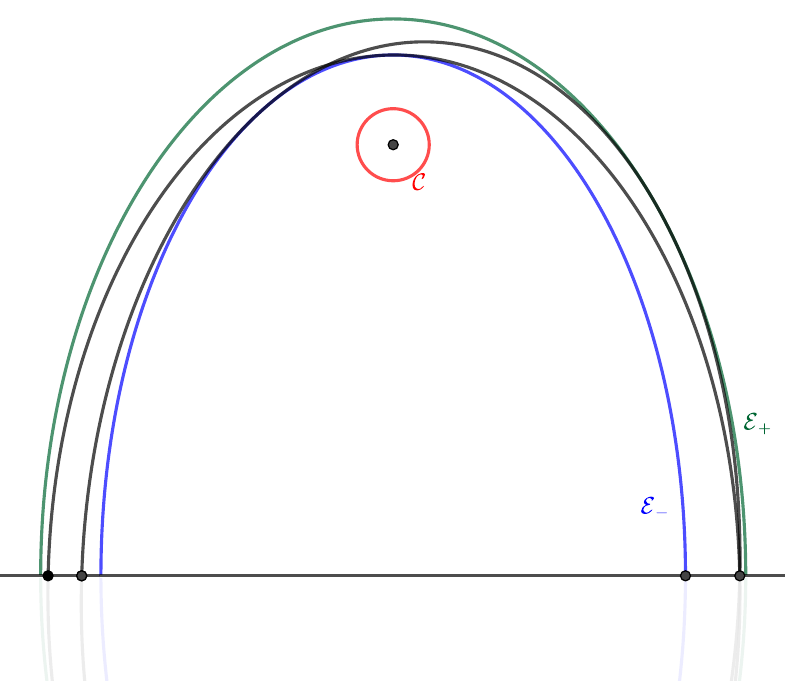} 
	\caption{Two iterations of the Boltzmann map with $(E,D)$ near $1+2DE+4E^2=0$ with $-\frac{1}{2}<E<0$. }
	\label{fig:Fomenko3b}
\end{figure}

This discussion shows that in this case, the Boltzmann system will be Liouville equivalent to the billiard within half-ellipse, as the Fomenko graph shown in the right-hand side of Figure \ref{FomenkoGraphs} is identical, see for example \cites{DR2009,Fokicheva2014}.
\end{proof}

\subsection*{Acknowledgment}
The research of M.~R.~ was supported
by the Australian Research Council, Discovery Project No.~DP190101838 \emph{Billiards within confocal quadrics and beyond}, and by the Science Fund of Serbia grant \emph{Integrability and Extremal Problems in Mechanics, Geometry and Combinatorics}, MEGIC, Grant No.~7744592.

\bibliographystyle{amsalpha}
\nocite{*}
\bibliography{References}

\end{document}